\newcommand\myshade{85}
\colorlet{mylinkcolor}{red}
\colorlet{mycitecolor}{blue}
\patchcmd{\section}{\scshape}{\bfseries}{}{}
\renewcommand{\@secnumfont}{\bfseries}
\theoremstyle{definition}
\newtheorem{thm}{Theorem}
\newtheorem*{thm*}{Theorem}
\newtheorem{prop}[thm]{Proposition} 
\newtheorem{lem}[thm]{Lemma}
\newtheorem{defi}[thm]{Definition} 
\newtheorem{cor}[thm]{Corollary}
\newtheorem{rem}[thm]{Remark}
\newtheorem*{quest*}{Question}
\newtheorem{conj}[thm]{Conjecture}
\newcommand{\la}[1]{\mathfrak{#1}}
\newcommand{\ZZ}{\mathbb{Z}}
\newcommand{\QQ}{\mathbb{Q}}
\newcommand{\CC}{\mathbb{C}}
\newcommand{\sW}{\mathscr{W}}
\newcommand{\sD}{\mathscr{D}}
\DeclareMathOperator{\ch}{ch}
\DeclareMathOperator{\coeff}{Coeff}
\DeclareMathOperator{\qdim}{qdim}
\DeclareMathOperator{\wt}{wt}
\DeclareMathOperator{\im}{\mathrm{im}}
\DeclareMathOperator{\len}{\mathrm{len}}
\newcommand{\lietype}[1]{\mathrm{#1}}
\newcommand{\jones}{\mathbb{J}}
\newcommand*\circled[1]{\tikz[baseline=(char.base)]{
		\node[shape=circle,draw,inner sep=1pt] (char) {\small #1};}}
\begin{document}
	
\date{}

\title[]{Coloured invariants of torus knots, $\mathscr{W}$ algebras, and relative asymptotic weight multiplicities}
\author{Shashank Kanade}
\address{Department of Mathematics, University of Denver, Denver, CO 80208}
\email{shashank.kanade@du.edu}

\thanks{
Support from Simons Foundation's Travel Support for Mathematicians
\#636937 is gratefully acknowledged.
I am extremely grateful to Tomoyuki Arakawa, Arvind Ayyer, Thomas Creutzig, Shrawan Kumar, Andy Linshaw, and
Antun Milas for many stimulating conversations.
}

\begin{abstract}	
	We study coloured invariants of torus knots $T(p,p')$ (where $p,p'$ are coprime positive integers).
	When the colouring Lie algebra is simply-laced, and when $p,p'\geq h^\vee$, we use the representation theory of the corresponding principal affine $\mathscr{W}$ algebras to understand the trailing monomials of the coloured invariants. In these cases, we show that the appropriate limits of the renormalized invariants are equal to the characters of certain $\mathscr{W}$ algebra modules
	(up to some factors).
	This result on limits rests on a purely Lie-algebraic conjecture on asymptotic weight multiplicities which we verify in some examples.
\end{abstract}

\subjclass{17B10, 17B69, 57K14}

\maketitle

\section{Introduction}

There are three interwoven strands in the present article -- knot theory, vertex operator algebras, and the behavior of weight multiplicities in modules for Lie algebras.

Specifically, we wish to relate coloured invariants of torus knots with characters of principal $\sW$ algebras which are certain kinds of vertex operator algebras (VOAs). 
Surprisingly, this relation rests on a conjecture about relative asymptotic weight multiplicities in finite-dimensional irreducible modules of finite-dimensional simple Lie algebras. We will get to this conjecture in due course, but interested readers may already take a peek at Definition \ref{def:relasympmul} and Conjecture \ref{conj:main}; both requiring only elementary notions.

The setup is as follows. 

We start with a fixed knot, say $K$, and a finite-dimensional simple Lie algebra $\overline{\la{g}}$
(most notions for $\overline{\la{g}}$ will be denoted with an overline, and the notions for the corresponding untwisted affinized Lie algebra $\la{g}$ without an overline). A dominant integral weight $\lambda\in \overline P_+$ gives rise to a finite-dimensional irreducible $\overline{\la{g}}$ module $L(\lambda)$. We ``colour'' the knot $K$ with $L(\lambda)$ and this data determines the coloured invariant $\jones_{K}(\lambda)$, which is a Laurent series in some fractional power of a formal variable $q$.
For this paper, we assume for convenience that $\jones$ is framing dependent and un-normalized (i.e., on unknot with zero framing, it evaluates to the $q$-character of the colouring module $L(\lambda)$). 
Also, for this paper, we will only concentrate on torus knots $K=T(p,p')$ where $p,p'\geq 1$ are coprime positive integers.

Importantly, we now renormalize $\jones$ by dividing through with its trailing monomial to obtain $\widehat\jones$.

The main question then is:
\begin{quest*}
	What can we say about:
	\begin{align*}
		\lim_{\substack{n\rightarrow \infty \\ n\lambda\in \lambda+\overline{Q}}} \widehat{\jones}_{K}(n\lambda)?
	\end{align*}
	When does the limit exist? Is it a ``nice'' $q$-series?
\end{quest*}

In many instances, this question is known to have very beautiful answers. For example, Armond--Dasbach \cite{ArmDas-RR} proved 
one of the celebrated Rogers--Ramanujan identities \cite{And-book} completely knot-theoretically using $K=T(2,5)$, $\overline {\la{g}}=\la{sl}_2$ by evaluating the limit above in two different ways. 

As one might expect, the $\la{sl}_2$ case has been studied extensively, with the most comprehensive results for the limits given by Garoufalidis--L\^{e} \cite{GarLe-Nahm}. Some general results on the $\la{sl}_3$ case are also now available due to the results of Yuasa \cite{Yua-stability}.
Meanwhile, Garoufalidis--Vuong studied \cite{GarVuo} these limits for torus knots when $\overline{\la{g}}$ has rank $2$.

Previously, in \cite{Kan-torus} we studied the case of torus knots coloured with the irreducible modules $\mathrm{Sym}^n(\CC^r)$ of $\la{sl}_r$, and showed how the limits are  related to the principal $\sW$ algebra characters. For similar results on certain torus links, relating the limits to characters of logarithmic VOAs, see \cite{Kan-toruslinks} and also the work of Hikami--Sugimoto \cite{HikSug}.

The aim of this paper is to generalize our results in \cite{Kan-torus} to general (finite-dimensional, simple) Lie algebras $\overline{\la{g}}$. We have not fully succeeded in this aim, the main obstacle being the relative asymptotic multiplicity conjecture that we have alluded to above. Nonetheless, our results go beyond the results of Garoufalidis--Vuong \cite{GarVuo} in that we are able to clarify the resulting limits.

The main knot-theoretic tool we use to evaluate these invariants for torus knots is the Rosso--Jones formula \cite{RosJon-torus} as explained in the work of Morton \cite{Mor-coloured} and used in the papers \cite{GarMorVuo}, \cite{GarVuo}, etc. This paper of Morton is central to our argument -- in fact, our treatment of this part is  a straight-forward generalization of the one given by Morton.

Using the Rosso--Jones formula, one immediately sees that the coloured invariants $\jones$ of torus knots look a lot like finitizations of characters of principal $\sW$ algebras, at least in the simply-laced case. Namely, there is a sum over a finite subset of the extended affine Weyl group, except, importantly, the $\jones$ involves  weight multiplicities whereas the corresponding factors in the character formula for the $\sW$ algebra modules are all $1$; compare equations \eqref{eqn:jones} and \eqref{eqn:wchar3} below.

At this point, our path diverges from the one taken by Garoufalidis--Vuong \cite{GarVuo}. Whereas they keep the various weight multiplicities untouched and only shift the power of $q$ to start at $q^0$ (see their notion of $c$-stability \cite[Def.\ 1.3]{GarVuo}), 
we renormalize by dividing through by the trailing monomial in $\jones$ to obtain $\widehat{\jones}$. The reason is that ultimately limits of $\widehat{\jones}$ are related to VOA characters. This is the first key insight. 

Naturally, the question is, what is the trailing monomial? This, in general, appears to not be easy. 

In the simply-laced case and when $p,p'\geq h^\vee$ (the dual Coxeter number of $\overline{\la{g}}$), we are thankfully able to use  powerful theorems in the representation theory of $\sW$ algebras due to Arakawa  \cite{Ara-vanishing}, \cite{Ara-repW1},
\cite{Ara-BGG}, \cite{Ara-c2}, \cite{Ara-princrat} to tackle this question. The main idea is to use resolutions of $\sW$ algebra modules in terms of Verma modules to understand the lowest order term of $\jones$. This is the second key insight.

In the non-simply-laced cases, our answer is much less satisfactory  since the theory of $\sW$ algebras is not applicable here; see Remark \ref{rem:nonSClimit} below.

In any case, after finding the correct trailing monomial, we renormalize $\jones$. Resulting formulas for $\widehat\jones$ are captured in Theorems \ref{thm:ADEjones_hat} and \ref{thm:minexp}.
Now, the coefficients of $\widehat{\jones}$ involve quotients of weight multiplicities of the module $L(\lambda)$. As we consider $n\lambda$ with $n\rightarrow \infty$, these quotients become \emph{relative asymptotic multiplicities}. To eventually match with $\sW$ algebra characters (in the simply-laced case), it would be sufficient for these quotients to all be $1$ in the limit.
This is precisely our relative asymptotic multiplicity conjecture, encapsulated as Conjecture \ref{conj:main}.
Note that this purely Lie-algebraic conjecture is not restricted to the simply-laced types, and we speculate that it holds in general.
This is our third contribution in this paper.

Assuming Conjecture \ref{conj:main}, our main theorem, where we calculate the limits of $\widehat{\jones}$ and in the simply-laced cases relate them to characters of relevant $\sW$ algebra modules, is given as Theorem \ref{thm:ADElimits}.

Clearly, this multiplicity conjecture is of independent interest, and we verify it in some examples included in Section \ref{sec:relasympmul}.

We will begin this paper with an extensive study of $\sW$ algebra characters. Naturally, we will first spend some effort in setting up the notation regarding affine Lie algebras. As the paper progresses, many unanswered questions will emerge, some of which are collected in the very last section.

\section{Preliminaries on affine Lie algebras}
Our main reference for this section is Wakimoto's book \cite{Wak-book2}, and also the papers \cite{FreKacWak} and \cite{KacWak-modinv}.

\subsection{Triangular decomposition}
Let $\la{g}$ be an (untwisted) affine Lie algebra of type $X_\ell^{(1)}$ (the simply-laced case is when $X\in \{\lietype{A},\lietype{D},\lietype{E}\}$).
We realize $\la{g}$ as
\begin{align*}
	\la{g}=\overline{\la{g}}\otimes \CC[t,t^{-1}] \oplus \CC c \oplus \CC d,
\end{align*}
where $\overline{\la{g}}$ is the corresponding  finite-dimensional simple Lie algebra of type $X_\ell$ over $\CC$. 
We will often use $\overline{\la{g}}$ and the subspace $\overline{\la{g}}\otimes t^{0} \subset \la{g}$ interchangeably.

We have the usual triangular decomposition for $\la{g}$, using the triangular decomposition for $\overline{\la{g}}=\overline{\la{n}}_-\oplus \overline{\la{h}}\oplus \overline{\la{n}}_+$:
\begin{align}
	\la{g}=\la{g}_- \oplus \la{h}\oplus \la{g}_+,
\end{align}
with 
\begin{align}
	\la{g}_-&=
	\overline{\la{n}}_-\otimes \CC[t^{-1}]\,\,\oplus\,\,
	\overline{\la{h}}\otimes t^{-1}\CC[t^{-1}]\,\,\oplus\,\, 
	\overline{\la{n}}_+\otimes t^{-1}\CC[t^{-1}]\\
	\la{g}_+&=
	\overline{\la{n}}_+\otimes \CC[t]\,\,\oplus\,\,
	\overline{\la{h}}\otimes t\CC[t]\,\,\oplus\,\, 
	\overline{\la{n}}_-\otimes t\CC[t]\\
	\la{h}&=\overline{\la{h}}\oplus \CC c\oplus \CC d.
\end{align}	

\subsection{Bases and pairings}
We will let $\mathsf{C}=(\mathsf{C}_{ij})_{0\leq i,j\leq \ell}$ be the Cartan matrix of $\la{g}$. The Cartan matrix of $\overline{\la{g}}$ is naturally $(\mathsf{C}_{ij})_{1\leq i,j\leq \ell}$. The labels and co-lables corresponding to $\mathsf{C}$ are denoted as $a_0,\cdots, a_\ell$ and $a_0^\vee,\cdots, a_\ell^\vee$.
The Coxeter and the dual Coxeter numbers are given as:
\begin{align*}
	h=a_0+\cdots+a_\ell,\quad h^\vee = a_0^{\vee} + \cdots +a_\ell^\vee.
\end{align*}
We always have $a_0^\vee=1$. We also have $a_0=1$ unless $\la{g}$ is of type $\lietype{A}_{2n}^{(2)}$, however, we have restricted our attention to untwisted affine Lie algebras, so this case is excluded.
	
A basis for $\la{h}$ is $\{\alpha_1^\vee,\cdots, \alpha_\ell^\vee,c,d\}$. Another natural basis is $\{\alpha_0^\vee,\alpha_1^\vee,\cdots, \alpha_\ell^\vee,d\}$,
where we have the relation 
$$c=a_0^\vee\alpha_0^\vee +\cdots+ a_\ell^\vee\alpha_\ell^\vee.$$
Several natural bases for $\la{h}^*$ are given as
$\{\Lambda_0,\alpha_0,\cdots,\alpha_\ell\}$, or $\{\Lambda_0,\delta,\alpha_1,\cdots,\alpha_\ell\}$, or 
$\{\delta,\Lambda_0,\cdots,\Lambda_\ell\}$.
We have 
$$\delta = a_0\alpha_0+\cdots+a_\ell\alpha_\ell.$$
The natural pairing between $\la{h}$ and $\la{h}^*$ is denoted by $\langle\cdot,\cdot\rangle$ and it satisfies $(0\leq i,j\leq \ell)$:
\begin{align*}
	\langle \Lambda_i,d\rangle =0,&\quad 
	\langle \Lambda_i,\alpha_j^\vee\rangle =\delta_{ij}\\
	\langle \alpha_j, \alpha_i^\vee\rangle = \mathsf{C}_{ij},&\quad 
	\langle \alpha_j, d\rangle = \delta_{j0}\\
	\langle \delta, \alpha_i^\vee\rangle &= \langle \alpha_i, c\rangle = 0.
\end{align*}
Here, and below, $\delta_{ij}$ is the Kronecker delta.
We have that:
\begin{align*}
	\overline{\la{h}}=\mathrm{Span}_{\CC}\{\alpha_1^\vee,\cdots,\alpha_\ell^\vee\},
\end{align*}
and we put
\begin{align*}
	\overline{\la{h}}^*=\mathrm{Span}_{\CC}\{\alpha_1,\cdots,\alpha_\ell\}.
\end{align*}
Under the decomposition
\begin{align*}
	\la{h}^*=\CC\Lambda_0 \oplus \overline{\la{h}}^* \oplus \CC\delta,
\end{align*}
the component in $\overline{\la{h}}^*$ of $\lambda\in \la{h}^*$ is denoted as $\overline{\lambda}$ and called the \emph{classical} or the \emph{finite} part of $\lambda$.

Given $\lambda\in \la{h}^*$, the number $\langle \lambda, c\rangle$ is called its level.
The set of elements of level $k$ will be denoted by $\la{h}^*_k$.

\subsection{Bilinear form}
There is a non-degenerate symmetric bilinear form on $\la{h}$ determined by:
\begin{align*}
	(\alpha_i^\vee,\alpha_j^\vee)=\mathsf{C}_{ij}\dfrac{a_j}{a_j^\vee},
	\quad (\alpha_i^\vee,d)=a_0\delta_{i0}=\delta_{i0},\quad (d,d)=0,
\end{align*}
Note that we have:
\begin{align*}
	(c,c) =0.
\end{align*}
It is customary to identify $\la{h}$ and $\la{h}^*$ under this form $( \cdot,\cdot )$, and we shall do so.
Under this identification, we have for all $0\leq i\leq \ell$,
\begin{align*}
	a_i^\vee\alpha_i^\vee \longleftrightarrow a_i\alpha_i,\quad d\longleftrightarrow a_0\Lambda_0=\Lambda_0,\quad c \longleftrightarrow \delta.
\end{align*}	
We may now transfer the form $(\cdot,\cdot)$ to $\la{h}^*$, and under this form, we have
\begin{align*}
	( \Lambda_0,\Lambda_0)=(\delta,\delta)=0,\quad (\Lambda_0,\alpha_i)=\dfrac{a_0^\vee}{a_0}\delta_{i0}=\delta_{i0}.
\end{align*}
As usual, we denote $( \lambda,\lambda)$ by $\|\lambda\|^2$ for $\lambda\in \la{h}^*$.

\subsection{Roots, weights, etc.}
We now recall various notations regarding root systems, root and weight lattices, etc.

We let the positive, negative, and all the roots for $\overline{\la{g}}$ be denoted by  $\overline{\Delta}_+,\overline{\Delta}_-,\overline{\Delta}=\overline{\Delta}_+\cup \overline{\Delta}_-$, respectively. Let $\overline{\Pi}$ denote the set of positive simple roots $\{\alpha_1,\cdots,\alpha_\ell\}$, and $\overline{\Pi}^\vee$ are the corresponding co-roots $\{\alpha_1^\vee,\cdots,\alpha_\ell^\vee\}$. Analogous notions for $\la{g}$ are denoted without the
overline.
We have the Weyl vector $\rho$ and Weyl co-vector $\rho^\vee$ to be any vectors which satisfy:
\begin{align*}
	\langle \rho, \alpha_i^\vee\rangle = 1, \quad 
	( \rho^\vee, \alpha_i ) = 1, \quad (0\leq i\leq \ell).
\end{align*}
We may take
\begin{align*}
	\rho = h^\vee \Lambda_0 + \overline{\rho},
\end{align*}
where
\begin{align*}
	\overline{\rho} = \frac{1}{2}\sum_{\alpha\in \overline{\Delta}_+} \alpha.
\end{align*}
	
We now have the following lattices
\begin{align*}
	Q=\ZZ\alpha_0+\cdots+\ZZ\alpha_\ell,&\quad \overline{Q}=\ZZ\alpha_1+\cdots+\ZZ\alpha_\ell,	\\
	Q^\vee=\ZZ\alpha_0^\vee+\cdots+\ZZ\alpha_\ell^\vee,&\quad \overline{Q}^\vee=\ZZ\alpha_1^\vee+\cdots+\ZZ\alpha_\ell^\vee,
\end{align*}
and the weight lattice for $\overline{\la{g}}$
\begin{align*}
	\overline{P}&=\ZZ\overline{\Lambda}_1+\cdots +\ZZ\overline{\Lambda}_\ell.
\end{align*}
The set of dominant integral weights for $\overline{\la{g}}$ is
\begin{align*}
	\overline{P}_+=\{ \lambda\in \overline{P}\,|\, \langle \lambda, \alpha_i^\vee\rangle \in \ZZ_{\geq 0}, (1\leq i\leq \ell)\}.
\end{align*}
We have the set of weights for $\la{g}$ as
\begin{align*}
	P = \{\lambda\in \la{h}^* \,\vert\, \langle \lambda, \alpha_i^\vee\rangle \in \ZZ, (0\leq i\leq \ell)\}.
\end{align*}
The subset $P_+$ is defined by replacing $\ZZ$ with $\ZZ_{\geq 0}$, and the subset $P^k_+$ further restricts to $\lambda$ of level $k$. The sets $P^\vee, P^{\vee}_+, P^{\vee,k}_{+}$ are defined by replacing $\alpha_i^\vee$ with $\alpha_i$ and $\langle\cdot,\cdot\rangle$ with $(\cdot,\cdot)$ appropriately.

\subsection{Weyl group}
The elements $r_i \in GL(\la{h}^*)$ for $0\leq i\leq \ell$ defined by
\begin{align*}
	r_i(\lambda)  = \lambda - \langle \lambda,\alpha_i^\vee\rangle \alpha_i
\end{align*}
generate the (affine) Weyl group $W$ of $\la{g}$. The subgroup $\overline{W}$ generated by $r_1,\cdots,r_\ell$ is isomorphic to the Weyl group of $\overline{\la{g}}$.
For all $\overline{w}\in \overline{W}$ and $w\in W$, we have:
\begin{align*}
	\overline{w}\Lambda_0 = \Lambda_0,\quad \overline{w}\overline{\la{h}}^*=\overline{\la{h}}^*,\quad w\delta = \delta.
\end{align*}
Consequently, for $\lambda\in \la{h}^*$ and $\overline{w}\in\overline{W}$,
\begin{align*}
	\overline{\overline{w}\lambda}=\overline{w}\,\overline{\lambda}.
\end{align*}
The shifted action of the Weyl group is defined for $w\in W$, $\lambda\in \la{h}^*$ by
\begin{align*}
	w\circ \lambda = w(\lambda+\rho) - \rho.
\end{align*}
The groups $W$ and $\overline{W}$ have a length function such that the simple reflections have length $1$. We then have the sign representation
\begin{align*}
	\varepsilon(w) = (-1)^{\len(w)} = \det{}_{\la{h}^*}(w)
\end{align*}
for $w\in W$.
The longest element of $\overline{W}$ is denoted as $w_0$. It satisfies
\begin{align*}
	\len(w_0) = |\overline{\Delta}_+|,\quad w_0\overline{\rho}=-\overline{\rho}.
\end{align*}

The Weyl group $W$ preserves the form $(\cdot,\cdot)$ on $\la{h}^*$.
The real roots are:
\begin{align*}
	\Delta^{re}=W\Pi.
\end{align*}	
For $\alpha\in \Delta^{re}$, $(\alpha,\alpha )\neq 0$ and  we define:
\begin{align*}
	\alpha^\vee = \frac{2\alpha}{(\alpha,\alpha)}.
\end{align*}
the real co-roots are then
\begin{align*}
	\Delta^{\vee,re}=\{\alpha^{\vee}\,|\, \alpha\in \Delta^{re}\}=W\Pi^\vee.
\end{align*}
The positive real roots and coroots are denoted $\Delta^{re}_+$, $\Delta^{\vee,re}_+$, respectively.

For an element $\alpha\in \mathrm{Span}_{\CC}\{\alpha_1,\cdots,\alpha_\ell\}$, define the translations $t_\alpha\in GL(\la{h}^*)$ by
\begin{align*}
	t_\alpha(\lambda) = \lambda + (\lambda,\delta)\alpha- \left(\frac{1}{2}\|\alpha\|^2(\lambda,\delta) + (\lambda,\alpha)\right)\delta.
\end{align*}
Note that for all $\alpha\in \mathrm{Span}_{\CC}\{\alpha_1,\cdots,\alpha_\ell\}$, we have:
\begin{align*}
	t_\alpha(\delta)=\delta.
\end{align*}
Since we are working with untwisted affine Lie algebras, we have:
\begin{align}
	W\cong \overline{W}\ltimes t_{\overline{Q}^\vee}=\{wt_{\alpha}\,\vert\, w\in\overline{W}, \alpha\in \overline{Q}^\vee\}.
	\label{eqn:Wsemidirect}
\end{align}
(recall that we have identified $\la{h}$ and $\la{h}^*$, so $\overline{Q}^\vee$ needs to be transported to $\la{h}^*$ accordingly.)

We will also require the extended affine Weyl group $\widetilde{W}$, which is given as 
\begin{align*}
\widetilde{W}=\widetilde{W}_+\ltimes W,
\end{align*}
and we now describe the finite group $\widetilde{W}_+$.
Let 
\begin{align}
	J = \{ i\,|\, a_i=a_i^\vee=1, 0\leq i\leq \ell\}.
	\label{eqn:Jset}
\end{align}
Then, $\widetilde{W}_+$ is isomorphic to the subgroup of Dynkin diagram automorphisms of $\la{g}$ that acts simply transitively on $J$.
In particular, we have:
\begin{align*}
	|\widetilde{W}_+|=|J|.
\end{align*}
It is important to have a concrete realization of $\widetilde{W}_+$.
The element corresponding to $0\in J$ is the identity.
If $0\neq j\in J$, there is a unique element $\overline{\sigma}_j\in \overline{W}$ that satisfies:
\begin{align*}
	\overline{\sigma}_j(\overline{\Pi})=\{ \alpha_1,\cdots, \alpha_{j-1},\widehat{\alpha_j},\alpha_{j+1}\cdots, \alpha_\ell,-\theta\},
\end{align*}
where $\widehat{\cdot}$ denotes omission, and $\theta=\delta-\alpha_0$ is the highest root of $\overline{\Delta}$. 
We assign $\overline{\sigma}_0=1$.
Now the element of $\widetilde{W}_+$ corresponding to $j$ is given as $\sigma_j = t_{\overline{\Lambda}_j}\overline{\sigma}_j$.
It is known that for $j\in J$:
\begin{align}
	\sigma_j(\Lambda_0)&=\Lambda_j\,\,(\mathrm{mod}\,\CC\delta),\notag\\
	\overline{\sigma}_j^{-1}(\alpha_j)&=-\theta,\notag\\
	\varepsilon(\overline{\sigma}_j)=
	&(-1)^{\len(\overline\sigma_j)}=
	(-1)^{2( \overline{\Lambda}_j,\overline{\rho})},
	\label{eqn:sgnsigmaj}\\
	\sigma_j\rho &\equiv \rho \,\,(\mathrm{mod}\,\CC\delta),
	\notag
	\\
	\overline{\sigma}_j^{-1}\overline{\Lambda}_j&=-\overline{\Lambda}_i.
	\label{eqn:Jij}
\end{align}
The first three can be found as \cite[Lem.\ 1.2.3]{Wak-book2}, the fourth as \cite[(1.48)]{Wak-book2}. For the last one, $i$ is such that $\sigma_j=\sigma_i^{-1}$, see \cite[Lem.\ 1.2.6]{Wak-book2}; here we also have that $\overline\Lambda_j+\overline\Lambda_i\in \overline{Q}$.

\subsection{Admissible weights}
An element $\lambda\in \la{h}^*$ is called an \emph{admissible weight} (see \cite[Sec.\ 3.1]{Wak-book2}) if it satisfies the following two conditions:
\begin{enumerate}
	\item For all $\alpha\in \Delta_+^{\vee,re}$, $\langle \lambda+\rho,\alpha\rangle \not\in\{0,-1,-2,\cdots\}$,
	\item $\QQ$-span of $\{\alpha\in \Delta_+^{\vee,re}\,|\, \langle \lambda+\rho,\alpha\rangle \in \ZZ\}$ = $\QQ$-span of $\Delta_+^{\vee,re}$.
\end{enumerate}

A complex number $k$ is called \emph{admissible} if the weight $k\Lambda_0$ is admissible. It is known \cite{KacWak-modinv}, \cite{KacWak-rationality} that $k$ is admissible iff the following holds:
\begin{align*}
	k+h^\vee = \frac{p}{p'}\,\,\mathrm{with}\,\,p,p'\in \ZZ_{>0}, (p,p')=1, 
	p\geq 
	\begin{cases}
		h^\vee & \mathrm{if}\,\, (r^\vee,p')=1\\
		h & \mathrm{if}\,\, (r^\vee,p')=r^\vee
	\end{cases}
\end{align*}
where $r^\vee$ is the maximal number of edges in the Dynkin diagram of $\la{g}$.

We put:
\begin{align*}
	\Delta_{\lambda,+}^{\vee,re}=\{\alpha\in \Delta_+^{\vee,re}\,|\, \langle \lambda+\rho,\alpha\rangle \in \ZZ_{>0}\},
\end{align*}
and we let $\Pi^\vee_\lambda$ denote the set of simple elements in $\Delta_{\lambda,+}^{\vee,re}$ (i.e, elements which can not be written as sum of two elements in $\Delta_{\lambda,+}^{\vee,re}$).

We put 
\begin{align*}
	W_\lambda = \langle r_{\alpha}, \alpha\in \Pi_\lambda^\vee\rangle,
\end{align*}
the natural length function on $W_\lambda$ will be denoted by $\len_\lambda$.

We say that an admissible weight $\lambda$ is \emph{principal admissible} if in addition to the above two conditions, we have that $\Pi^\vee_\lambda$ is of the same type as $\Pi^\vee$. The set of principal admissible weights of level $k$ is denoted as $Pr^k$.

A principal admissible weight $\lambda$ is called \emph{non-degenerate} if (see \cite[Sec.\ 4.1]{Wak-book2})
\begin{align*}
	\langle \lambda+\rho, \alpha^\vee\rangle \not\in \ZZ,\,\, \mathrm{for \,\,all}\,\,\alpha^\vee\in \overline{\Delta}^\vee.
\end{align*}
Let us denote the set of non-degenerate principal admissible weights of level $k$ by $Pr^{k}_{nondeg}$.

We say that a level $k$ is \emph{non-degenerate principal admissible level} iff $Pr^{k}_{nondeg}\neq \emptyset$ which happens iff (see \cite[Eq.\ (323)]{Ara-repW1})
\begin{align*}
k+h^\vee = \frac{p}{p'}\,\, \mathrm{with}\,\,
p,p'\in \ZZ_{>0}, (p,p')=1, p\geq h^\vee, p'\geq h, (p',r^\vee)=1.
\end{align*}
For $\la{g}$ simply-laced, the condition $(p',r^\vee)=1$ is vacuous.

Let $k$ be a non-degenerate principal admissible number. For the characterization of the set $Pr^{k}_{nondeg}$, we refer to \cite[Prop.\ 4.1.2]{Wak-book2}, and it is given as follows:

For $(\nu, \mu)\in P_+^{p-h^\vee}\times P_+^{\vee, p'-h}$, define:
\begin{align*}
	\Lambda(\nu,\mu)
	&= \nu - (k+h^\vee)(\mu+\rho^\vee) + (k+h^\vee)\Lambda_0.
\end{align*}
Then, for all $\overline w\in \overline{W}$, $\overline w\circ \Lambda(\lambda,\mu)\in Pr^{k}_{nondeg}$ and conversely, every element of $Pr^{k}_{nondeg}$ can be written like this although not necessarily in a unique way.
In fact, for $(\nu,\mu,\overline w), (\nu',\mu',\overline w')\in P_+^{p-h^\vee}\times P_+^{\vee, p'-h}\times\overline{W}$,
\begin{align*}
	\overline w\circ \Lambda(\nu,\mu)=\overline w'\circ \Lambda(\nu',\mu') \Longleftrightarrow (\nu,\mu,\overline w)=(\sigma_j\nu',\sigma_j\mu',\overline w'(\overline{\sigma}_j)^{-1}),
\end{align*}
for some $j\in J$.

Combining \cite[Eq.\ (3.26), (3.13)]{Wak-book2}, the integral Weyl group with respect to such a $\overline w\circ\Lambda(\nu,\mu)$ is given by:
\begin{align}
	W_{\overline w\circ \Lambda(\nu,\mu)}=\overline wt_{-\overline{\mu+\rho^\vee}} \left(\overline{W}\ltimes t_{p' \overline{Q}^\vee}\right) t_{\overline{\mu+\rho^\vee}}\overline w^{-1}.
	\label{eqn:Wnumu}
\end{align}

\subsection{Simply-laced case} 
\label{sec:simplylaced}
When $\la{g}$ is simply-laced, or equivalently, when the Cartan matrix $\mathsf{C}$ is symmetric, all of which happens iff $\la{g}$ is of type $X_\ell^{(1)}$ with $X\in\{\lietype{A},\lietype{D}, \lietype{E}\}$, several formulas naturally simplify. Namely, we have for $0\leq i \leq \ell$,
\begin{align*}
	a_i &= a_i^{\vee},\quad  \alpha_i^\vee \longleftrightarrow \alpha_i,\quad h=h^\vee,\quad \rho = \rho^\vee,\quad \overline{Q}=\overline{Q}^\vee,\quad P^{\vee,p'-h}_+=P_+^{p'-h},
\end{align*}
etc.

\section{Principal $\sW$ algebras}
In this section, we study characters of (irreducible) modules of the principal affine $\sW$ algebras.
We will carefully understand the location of the lowest order terms in these characters, using the representation-theoretic results. 
In the simply-laced case, this will be used to understand the trailing monomials of $\jones$ invariants for the torus knots.
It will take us too far to define the principal affine $\sW$ algebras, which are certain vertex operator algebras; we only recall the important theorems of Arakawa on their representation theory.

Due to the triangular decomposition of $\la{g}$, given $\lambda\in \la{h}^*$, we have the Verma module $\mathbb{M}(\lambda)$ for $\la{g}$ with highest weight $\lambda$ and its unique irreducible quotient $\mathbb{L}(\lambda)$.
We also have natural notions of category $\mathcal{O}$ and $\mathcal{O}_k$ (of level $k$ modules in $\mathcal{O}$) for $\la{g}$. A certain full subcategory $\dot{\mathcal{O}}_k$ was introduced in \cite{Ara-repW1} and
$\mathbb{M}(\lambda)$ and $\mathbb{L}(\lambda)$ belong to $\dot{\mathcal{O}}_k$ whenever $\lambda$ has level $k$ \cite[Lem.\ 7.1.1]{Ara-repW1}. In some cases, we will need to denote $\mathcal{O}$ by $\mathcal{O}^{\la{g}}$ to emphasize the underlying $\la{g}$.

The universal principal affine $\sW$ algebra for $\la{g}$ at level $k$ is denoted by $\sW^{k}(\la{g})$ (or, when $\la{g}$ is clear, simply $\sW^k$). Its unique simple quotient is denoted by $\sW_k(\la{g})$. 
It is important to note that the notation in \cite{Ara-repW1} is different -- there, mainly the universal affine principal $\sW$ algebra is studied, and it is denoted by $\sW_k$. However, the notation we shall use is also fairly standard.
In any case, there is a notion of category $\mathcal{O}$ for $\sW^k$, denoted by $\mathcal{O}(\sW^k)$ \cite[p.\ 286]{Ara-repW1}.

We now describe the Verma modules and irreducible modules for $\sW^k$.

Given $\overline{\lambda}\in \overline{\la{h}}^*$, we have the $\overline{\la{g}}$ Verma module $\overline{\mathbb{M}}(\overline{\lambda})$. Let $\gamma_{\overline{\lambda}}$ be the central character corresponding to  $\overline{\mathbb{M}}(\overline{\lambda})$, i.e., for all $z\in \mathcal{Z}(\la{g})$ (the center of the universal enveloping algebra) and $m\in \overline{\mathbb{M}}(\overline{\lambda})$, we have $zm=\gamma_{\overline{\lambda}}(z)m$.
Associated to such a $\gamma_{\overline{\lambda}}$ we have the $\sW^k$ Verma module
$\mathbf{M}(\gamma_{\overline{\lambda}})$, and its unique irreducible quotient $\mathbf{L}(\gamma_{\overline{\lambda}})$. Both of these objects belong to $\mathcal{O}(\sW^k)$ \cite[p.\ 286]{Ara-repW1}.

There is a BRST reduction functor 
\begin{align*}
	H^0_-:  \dot{\mathcal{O}}_k\rightarrow \mathcal{O}(\sW^k).
\end{align*} 
The following sequence of results due to Arakawa is important.

\begin{thm} 
\label{thm:H0-}	
The functor $H^0_-:  \dot{\mathcal{O}}_k\rightarrow \mathcal{O}(\sW^k)$ satisfies the following properties.
\begin{enumerate}
	\item \cite[Cor.\ 7.6.2]{Ara-repW1}
	This functor is exact.
	\item Due to \cite[Thm.\ 7.5.1]{Ara-repW1} and \cite[Thm.\ 5.7]{Ara-vanishing} we have that for $\lambda\in \la{h}^*_k$, 
	\begin{align*}
		H^0_-(\mathbb{M}(\lambda))\cong\mathbf{M}(\gamma_{-w_0(\overline{\lambda})}).
	\end{align*}
	\item For $\lambda\in Pr^k_{nondeg}$, we have \cite[Cor.\ 7.6.4]{Ara-repW1}:
	\begin{align*}
		H^0_-(\mathbb{L}(\lambda))\cong \mathbf{L}(\gamma_{-w_0(\overline{\lambda})})\neq 0.
	\end{align*} 
	The last inequality is important and follows from \cite[Prop. 7.2.3]{Ara-repW1}.
	\item For a non-degenerate principal admissible number $k$, and 
	\begin{align*}
		\lambda = \Lambda( (p-h^\vee)\Lambda_0, (p'-h)\Lambda_0)\in Pr^{k}_{nondeg}
	\end{align*}
	we have (\cite[Rem.\ 7.6.5]{Ara-repW1}):
	\begin{align*}
		\sW^k&\cong H^0_-(\mathbb{M}(\lambda))\cong\mathbf{M}(\gamma_{-w_0(\overline{\lambda})})\\
		\sW_k&\cong H^0_-(\mathbb{L}(\lambda))\cong\mathbf{L}(\gamma_{-w_0(\overline{\lambda})}).
	\end{align*}
\end{enumerate}	
\end{thm}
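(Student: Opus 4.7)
The statement is a compilation of Arakawa's results from the papers cited in the individual items, so my plan is to assemble it as a bookkeeping exercise around the $-w_0$ twist introduced by the ``minus'' quantum Drinfeld--Sokolov reduction. For item (1) one invokes \cite[Cor.\ 7.6.2]{Ara-repW1} directly: the subcategory $\dot{\mathcal{O}}_k$ is defined so that the higher BRST cohomologies $H^i_-$, $i\neq 0$, vanish on it, which forces $H^0_-$ to be exact. For item (2) I would combine the Euler--Poincar\'e character computation of \cite[Thm.\ 7.5.1]{Ara-repW1} with the vanishing of higher reductions in \cite[Thm.\ 5.7]{Ara-vanishing}: the former identifies the alternating sum of characters of $H^i_-(\mathbb{M}(\lambda))$ with the character of $\mathbf{M}(\gamma_{-w_0(\overline{\lambda})})$, while the latter rules out cancellation between even and odd cohomological degrees, so the identification takes place entirely in degree zero. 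The appearance of $-w_0(\overline{\lambda})$ rather than $\overline{\lambda}$ itself is the expected feature of the minus reduction, which interchanges the roles of $\overline{\la{n}}_+$ and $\overline{\la{n}}_-$ compared to the plus version.

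For item (3), apply the exact functor $H^0_-$ to the natural surjection $\mathbb{M}(\lambda) \twoheadrightarrow \mathbb{L}(\lambda)$; by \cite[Cor.\ 7.6.4]{Ara-repW1} the image is either zero or the unique irreducible quotient $\mathbf{L}(\gamma_{-w_0(\overline{\lambda})})$ of $\mathbf{M}(\gamma_{-w_0(\overline{\lambda})})$, and the non-vanishing for $\lambda \in Pr^k_{nondeg}$ is \cite[Prop.\ 7.2.3]{Ara-repW1} -- this is where the non-degeneracy assumption becomes essential, since without it the reduction can annihilate $\mathbb{L}(\lambda)$ outright. Item (4) is then the specialization of items (2) and (3) to $\lambda = \Lambda((p-h^\vee)\Lambda_0, (p'-h)\Lambda_0)$, combined with \cite[Rem.\ 7.6.5]{Ara-repW1}, which records that at this distinguished weight the target Verma module coincides with the universal $\sW^k$ itself and its simple quotient with $\sW_k$.

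The main obstacle, such as it is, is purely bookkeeping: one must check that the weight in item (4) lies in $Pr^k_{nondeg}$ (immediate from the characterization of non-degenerate principal admissible levels spelled out earlier) and that its classical part, twisted by $-w_0$, really corresponds to the vacuum class on the $\sW^k$ side. No further mathematical input is required beyond the five Arakawa references already displayed in the statement.
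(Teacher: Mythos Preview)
Your proposal is correct and matches the paper's treatment: the theorem is stated in the paper as a compilation of Arakawa's results with the citations embedded in the statement itself, and no separate proof is given. Your added commentary on the logical flow (e.g., how the vanishing result eliminates higher cohomologies so that the Euler--Poincar\'e computation collapses to degree zero, and why non-degeneracy is needed for non-vanishing in item (3)) is accurate and helpful, but goes slightly beyond what the paper records.
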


Now we recall the resolutions of admissible weight modules $\mathbb{L}(\lambda)$ for $\la{g}$.

\begin{thm} 
\label{thm:BGG}	
Let $k$ be an admissible number and let $\lambda\in Pr^k$.
Then, there exists an exact sequence of $\la{g}$ modules of the form:
\begin{align}
	\cdots 
	\xrightarrow{d_3} 
	\bigoplus_{\substack{w\in W_\lambda \\ \len_{\lambda}(w)=2}} \mathbb{M}(w\circ \lambda)
	\xrightarrow{d_2} 
	\bigoplus_{\substack{w\in W_\lambda \\ \len_{\lambda}(w)=1}} \mathbb{M}(w\circ \lambda)
	\xrightarrow{d_1}
	\mathbb{M}(\lambda)
	\xrightarrow{\pi} 
	\mathbb{L}(\lambda)
	\rightarrow 0.
	\label{eqn:admBGG}
\end{align}
Moreover, each morphism $d_i$ ($i\geq 1$) restricted to any summand $\mathbb{M}(w\circ \lambda)$ with $\len_\lambda(w)=i$ is injective.
\end{thm}
\begin{proof}
	The resolution \eqref{eqn:admBGG} is constructed by Arakawa in \cite[Thm.\ 6.4, 6.5]{Ara-BGG}.
	Arakawa's proof is as follows. 
	By $\mathcal{O}^{\la{g}}_{[\lambda]}$ let us denote the block of category $\mathcal{O}^{\la{g}}$ for $\la{g}$ that contains the module $\mathbb{L}(\lambda)$.
	Using Fiebig's results \cite{Fei-combO}, there is an equivalence of categories 
	\begin{align}
		\mathcal{O}^{\la{g}}_{[\lambda]}\cong\mathcal{O}^{\la{g}'}_{[\lambda']}, \label{eqn:catOeqv}
	\end{align}
	where $\la{g}'$ is some symmetrizable Kac--Moody Lie algebra  whose Weyl group $W'$ is isomorphic to $W_\lambda$, $\lambda'$ is a dominant integral weight for $\la{g}'$ and $\mathcal{O}^{\la{g}'}_{[\lambda']}$ the block of $\mathcal{O}^{\la{g}'}$ containing the integrable highest-weight module $\mathbb{L}_{\la{g}'}(\lambda')$. Under this equivalence, $\mathbb{M}_{\la{g}}(w\circ \lambda)$ and $\mathbb{L}_{\la{g}}(w\circ \lambda)$ ($w\in W_\lambda)$ map to 
	$\mathbb{M}_{\la{g}'}(\phi(w)\circ \lambda')$ and $\mathbb{L}_{\la{g}'}(\phi(w)\circ \lambda')$ where $\phi:W_\lambda\xrightarrow{\cong}W'$ is an isomorphism.
	
	Now, the BGG resolution for $\mathbb{L}_{\la{g}'}(\lambda')$ given in \cite{GarLep} and \cite{RocWal-proj}
	\begin{align}
		\cdots 
		\xrightarrow{d_3'} 
		\bigoplus_{\substack{w\in W' \\ \len'(w)=2}} \mathbb{M}_{\la{g}'}(w\circ \lambda')
		\xrightarrow{d_2'} 
		\bigoplus_{\substack{w\in W' \\ \len'(w)=1}} \mathbb{M}_{\la{g}'}(w\circ \lambda')
		\xrightarrow{d_1'}
		\mathbb{M}_{\la{g}'}(\lambda')
		\xrightarrow{\pi'} 
		\mathbb{L}_{\la{g}'}(\lambda')
		\rightarrow 0
		\label{eqn:admBGG'}
	\end{align}
	is transported to the required resolution \eqref{eqn:admBGG} via the equivalence of categories \eqref{eqn:catOeqv}.
	Note that an equivalence of abelian categories is exact -- it is both  left and right adjoint \cite{Mac-cat}, and thus both right and left exact.
	
	Now, we prove the second statement. In the resolution \eqref{eqn:admBGG'}, 
	it is known by \cite[Lem.\ 8.3]{RocWal-proj} that each $d_i'$ ($i\geq 1$) restricted to a summand $\mathbb{M}_{\la{g}'}(w\circ\lambda')$ (with $w\in W'$ of length $i$) is an injection. 
	The equivalence of abelian categories \eqref{eqn:catOeqv} is exact, and thus sends injections to injections.
\end{proof}	

Assuming $k$ to be a non-degenerate principal admissible level, $\lambda\in Pr^k_{nondeg}$ and applying the exact functor $H^0_-$ to \eqref{eqn:admBGG}, we get an exact sequence of $\sW^k$ modules.
\begin{thm}
	\label{thm:WBGG}
	Let $k$ be a non-degenerate principal admissible number, and $\lambda\in Pr^k_{nondeg}$. Then, we have an exact sequence of $\sW^k$ modules:
	\begin{align}
		\cdots 
		\xrightarrow{\boldsymbol{d}_2}
		\bigoplus_{\substack{w\in W_\lambda \\ \len_{\lambda}(w)=1}} \mathbf{M}(\gamma_{-w_0(\overline{w\circ \lambda})})
		\xrightarrow{\boldsymbol{d}_1}
		\mathbf{M}(\gamma_{-w_0(\overline{\lambda})})
		\xrightarrow{\boldsymbol{\pi}}
		\mathbf{L}(\gamma_{-w_0(\overline{\lambda})})
		\rightarrow 0.
		\label{eqn:WBGG}
	\end{align}
	Moreover, each $\boldsymbol{d}_i$ ($i\geq 1$) restricted to any summand $\mathbf{M}(\gamma_{-w_0(\overline{w\circ \lambda})})$ with $\len_\lambda(w)=i$ is an injection.
\end{thm}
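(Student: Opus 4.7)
The plan is to simply apply the exact BRST functor $H^0_-$ term-by-term to the BGG-type resolution \eqref{eqn:admBGG} and then identify the images using parts (2) and (3) of Theorem \ref{thm:H0-}.

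First I would check the preliminaries that make the functorial manipulation legal. Since the affine Weyl group $W$ fixes $\delta$ (equivalently fixes $c$ under the identification $\la{h}\leftrightarrow\la{h}^*$), every shifted-action translate $w\circ\lambda = w(\lambda+\rho)-\rho$ has the same level $k$ as $\lambda$. By \cite[Lem.\ 7.1.1]{Ara-repW1} cited in the excerpt, the Verma modules $\mathbb{M}(w\circ\lambda)$ and the irreducible quotient $\mathbb{L}(\lambda)$ therefore all lie in $\dot{\mathcal{O}}_k$, so $H^0_-$ is defined on each term. Moreover, for each fixed $i\geq 0$ the set $\{w\in W_\lambda:\len_\lambda(w)=i\}$ is finite (a Coxeter group has only finitely many elements of any given length), so the direct sums appearing in \eqref{eqn:admBGG} are finite and are preserved by the additive functor $H^0_-$.

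Next I would invoke exactness. Applying $H^0_-$ to \eqref{eqn:admBGG} and using Theorem \ref{thm:H0-}(1), one obtains an exact sequence in $\mathcal{O}(\sW^k)$ of the same shape. The final identification step is to name the images: Theorem \ref{thm:H0-}(2) gives
\begin{align*}
H^0_-\bigl(\mathbb{M}(w\circ\lambda)\bigr)\cong \mathbf{M}(\gamma_{-w_0(\overline{w\circ\lambda})})
\end{align*}
for every $w\in W_\lambda$ (including $w=1$, which handles the middle term $\mathbb{M}(\lambda)$), while Theorem \ref{thm:H0-}(3), together with the hypothesis $\lambda\in Pr^k_{nondeg}$, gives
\begin{align*}
H^0_-\bigl(\mathbb{L}(\lambda)\bigr)\cong \mathbf{L}(\gamma_{-w_0(\overline{\lambda})})\neq 0.
\end{align*}
Matching these identifications to the terms of \eqref{eqn:admBGG} yields exactly the sequence \eqref{eqn:WBGG}.

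There is essentially no obstacle in this argument; all the heavy lifting has been done in the quoted results of Arakawa (exactness of $H^0_-$, its values on Vermas, and the non-vanishing on $\mathbb{L}(\lambda)$ for non-degenerate principal admissible $\lambda$). The only point worth flagging is that one is \emph{not} claiming the higher kernels $\mathbb{L}(w\circ\lambda)$ (for $w\neq 1$) map to nonzero $\sW^k$-modules — we never need this, because those irreducibles do not appear in \eqref{eqn:admBGG}; only Verma modules appear to the left of $\mathbb{L}(\lambda)$, and for Vermas the identification in Theorem \ref{thm:H0-}(2) is unconditional in $\la{h}^*_k$.
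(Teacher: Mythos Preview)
Your proposal is correct and follows exactly the approach the paper takes: the paper simply states that one applies the exact functor $H^0_-$ to \eqref{eqn:admBGG} and identifies the resulting terms via Theorem \ref{thm:H0-}. Your additional remarks (preservation of level under the $W$-action, finiteness of the direct sums, unconditionality of part (2) on Vermas) are all valid sanity checks that the paper leaves implicit.
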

\begin{proof}
	The resolution \eqref{eqn:WBGG} is obtained immediately by applying $H^0_-$ to \eqref{eqn:admBGG} and using its properties from Theorem \ref{thm:H0-}. 
	From the second statement in Theorem \ref{thm:BGG},
	we have the exact sequence (for $w\in W_\lambda$ of length $i$):
	\begin{align*}
		0\rightarrow \mathbb{M}(w\circ \lambda) \xrightarrow{d_i\vert_{\mathbb{M}(w\circ \lambda)}} \bigoplus_{\substack{w\in W_\lambda\\ \len_\lambda(w)=i-1}}
		\mathbb{M}(w\circ \lambda).
	\end{align*}
	Applying the exact functor $H^0_-$ gets us the exact sequence
	\begin{align*}
		0\rightarrow \mathbf{M}(\gamma_{-w_0(\overline{w\circ \lambda})}) \xrightarrow{\boldsymbol{d}_i\vert_{\mathbf{M}(\gamma_{-w_0(\overline{w\circ \lambda})})}} \bigoplus_{\substack{w\in W_\lambda\\ \len_\lambda(w)=i-1}}
		\mathbf{M}(\gamma_{-w_0(\overline{w\circ \lambda})}).
	\end{align*}
	This proves the second part of the theorem.
\end{proof}

At non-critical levels, the Verma modules $\mathbf{M}$ and their irreducible quotients $\mathbf{L}$ are graded by conformal weights. 

\begin{thm} \cite[Prop.\ 5.6.6]{Ara-repW1}
	\label{thm:WVermachar}
	For $\lambda\in\la{h}^*_k$ with $k\neq -h^\vee$, we have:
	\begin{align}
		\ch \mathbf{M}(\gamma_{\overline{\lambda}}) = 
		\dfrac{1}{\eta(q)^\ell}
		q^{\frac{\|\overline{\lambda}+\overline{\rho}\|^2}{2(k+h^\vee)}}
		\label{eqn:WVermachar}
	\end{align}
	where $\eta$ is the Dedekind function $\eta(q) = q^{1/24}(q)_\infty$.
	In particular, the lowest conformal weight space is $1$ dimensional -- spanned by the highest weight vector.
\end{thm}

Thus, calculating the characters in \eqref{eqn:WBGG} we have the following.
\begin{thm}
	Let $k$ be a non-degenerate principal admissible number, and $\lambda\in Pr^k_{nondeg}$.
	\begin{enumerate}
	\item We have:
		\begin{align}
			\eta(q)^\ell\cdot \ch \mathbf{L}(\gamma_{-w_0(\overline{\lambda})}) 
			&=
			\sum_{\substack{w\in W_\lambda}}
			(-1)^{\len_\lambda(w)}
			q^{\frac{\|-w_0\overline{(w\circ\lambda)}+\overline{\rho}\|^2}{2(k+h^\vee)}}
			\label{eqn:wchar}.
		\end{align}
		\item The map 
		\begin{align}
			w\mapsto \frac{\|-w_0\overline{(w\circ\lambda)}+\overline{\rho}\|^2}{2(k+h^\vee)}
			\label{eqn:mapconfwt}
		\end{align}
		defined on $W_\lambda$ achieves its (global) minimum uniquely at $w=1$.
	\end{enumerate}
\end{thm}	
\begin{proof}
	The first assertion is simply the Euler--Poincar\'e principle applied to \eqref{eqn:WBGG} using \eqref{eqn:WVermachar}.
	
	It is the second assertion which is very important for us.
	
	For the purpose of this proof, let us denote $\mathbf{M}(\gamma_{-w_0(\overline{w\circ \lambda})})$ in a shorthand as $\mathbf{M}(\lambda, w)$, its highest weight vector as $v(\lambda, w)$ which has conformal weight $\Delta(\lambda, w)$. With this notation, the map \eqref{eqn:mapconfwt} is simply $w\mapsto \Delta(\lambda,w)+\frac{\ell}{24}$. We may and do ignore the term $\frac{\ell}{24}$.

	We must thus prove that for all $w\in W_\lambda$, $w\neq 1$ we have $\Delta(\lambda,w) > \Delta(\lambda,1)$. We proceed by induction on $\len_\lambda(w)$.
	
	First let $\len_\lambda(w)=1$. Now, by Theorem \ref{thm:WBGG}, $\boldsymbol{d}_1\vert_{\mathbf{M}(\lambda,w)}$ is an injection, thus $0\neq \boldsymbol{d}_1(v(\lambda,w))\in \mathbf{M}(\lambda,1)$. All maps in \eqref{eqn:WBGG} (in particular, $\boldsymbol{d}_1$) preserve conformal weights. Thus, $\Delta(\lambda,w)\geq \Delta(\lambda,1)$. If the equality were to hold, then, since the lowest conformal weight space of $\mathbf{M}(v,1)$ is one-dimensional (Theorem \ref{thm:WVermachar}), we would have $0\neq \boldsymbol{d}_1(v(\lambda,w))$ to be proportional to $v(\lambda,1)$.
	In particular, $v(\lambda,1)\in \im(\boldsymbol{d}_1) = \ker(\boldsymbol{\pi})$ (by exactness of \eqref{eqn:WBGG}). This would imply that $\mathbf{L}(-\gamma_{w_0(\overline \lambda)})=0$ contradicting Theorem \ref{thm:H0-} part (3).
	Thus we have proved that for all $w\in W_\lambda$ with $\len_\lambda(w)=1$, $\Delta(\lambda,w)>\Delta(\lambda,1)$.
	
	Now let $\len_\lambda(w)=i$, $i>1$. Again, $\boldsymbol{d}_i(v(\lambda,w))\neq 0$ by Theorem \ref{thm:WBGG}. Looking at conformal weights, we conclude 
	\begin{align*}
		\Delta(\lambda,w)\geq \min_{\substack{w\in W_\lambda,\\ \len(w)=i-1}}\Delta(\lambda,w) > \Delta(\lambda,1), 
	\end{align*}
	where the last inequality is by the induction hypothesis.
	This finishes the proof.
\end{proof}

We rewrite the characters more explicitly as follows.
\begin{prop}
	\label{cor:Wminexp}
	Let $k=\frac{p}{p'}-h^\vee$ be a non-degenerate principal admissible number, let $j\in J$ and finally, let $\lambda=\Lambda(\sigma_j\nu,\mu)$ for $(\nu,\mu)\in P^{p-h^\vee}_+\times P^{\vee, p'-h}_+$.
	\begin{enumerate}
		\item We have:
		\begin{align}
			&{\eta(q)^\ell}\, \ch \mathbf{L}(\gamma_{-w_0(\overline{\lambda})})\notag\\
			&=
			(-1)^{2 (\overline\Lambda_j,\overline\rho)}
			\sum_{\substack{\alpha\in \overline{Q}^\vee - \overline{\Lambda}_i, \overline{w}\in \overline{W}}}
			(-1)^{\len(\overline{w})}
			q^{\frac{pp'}{2}
				\left\| \alpha +\frac{\overline{\nu}+\overline{\rho}}{p}-\overline{w}\frac{\overline{\mu}+\overline{\rho^\vee}}{p'}\right\|^2}
			\label{eqn:wchar3}
		\end{align}
		where the index $i\in J$ is such that $\overline{\sigma_j}^{-1}\overline{\Lambda}_j=-\overline{\Lambda}_i$, equivalently, $\sigma_i=\sigma_j^{-1}$; see \eqref{eqn:Jij}.
		\item The map
		\begin{align}
			(\alpha, \overline{w}) \mapsto \frac{pp'}{2}
			\left\| \alpha +\frac{\overline{\nu}+\overline{\rho}}{p}-\overline{w}\frac{\overline{\mu}+\overline{\rho^\vee}}{p'}\right\|^2
			\label{eqn:wchar3expfun}
		\end{align}
		defined on $(\overline{Q}^\vee-\overline{\Lambda}_i)\times \overline{W}$ achieves its (global) minimum uniquely at $\alpha= -\overline{\Lambda}_i$, $\overline{w}=\overline{\sigma}_j^{-1}$.
	\end{enumerate}
\end{prop}
\begin{proof}
	We rewrite each term in \eqref{eqn:wchar} using the description of $W_{\Lambda(\sigma_j\nu,\mu)}$ from \eqref{eqn:Wnumu}. 
	Let
	\begin{align*}
		w= t_{-\overline{\mu+\rho^\vee}}\,\overline{w}t_{p'\alpha}\,t_{\overline{\mu+\rho^\vee}}
		\in W_{\Lambda(\sigma_j\nu,\mu)},
	\end{align*}
	with $\overline{w}\in\overline{W}, \alpha\in \overline{Q}^\vee$.
	Note that 
	\begin{align}
		(-1)^{\len_\lambda(w)}=
		(-1)^{\len(\overline{w}t_{p'\alpha})}
		=(-1)^{\len(\overline{w})}=(-1)^{\len(\overline{w}^{-1})}.
	\end{align}
	Also, notice that $w=1$ corresponds to $\overline{w}=1,\alpha=0$.
	Then, simplifying the exponents of terms in \eqref{eqn:wchar}
	\begin{align*}
		&\|-w_0\overline{(w\circ\Lambda(\sigma_j\nu,\mu))}+\overline{\rho}\|^2
		=\|-w_0\overline{(w\circ\Lambda(\sigma_j\nu,\mu))}-w_0\overline{\rho}\|^2\notag\\
		&=\|\overline{(w\circ\Lambda(\sigma_j\nu,\mu))}+\overline{\rho}\|^2
		=\|\overline{w(\Lambda(\sigma_j\nu,\mu)+\rho) -\rho}+\overline{\rho}\|^2=\|\overline{w(\Lambda(\sigma_j\nu,\mu)+\rho)}\|^2\notag\\
		&=\left\|\overline{w\left(\sigma_j(\nu+\rho) - \frac{p}{p'}(\mu+\rho^\vee) +\frac{p}{p'}\Lambda_0+\cdots \delta \right) }\right\|^2\notag\\
		&=\left\|\overline{t_{-\overline{\mu+\rho^\vee}}\overline{w}\left(\sigma_j(\nu+\rho) - \frac{p}{p'}(\mu+\rho^\vee) +\frac{p}{p'}\Lambda_0  +\frac{p}{p'}(p'\alpha+\overline{\mu+\rho^\vee})+\cdots\delta\right)}\right\|^2\notag\\	
		&=\left\|\overline{t_{-\overline{\mu+\rho^\vee}}\overline{w}\left(\sigma_j(\nu+\rho) - \frac{p}{p'}(\mu+\rho^\vee-\overline{\mu+\rho^\vee}) +\frac{p}{p'}\Lambda_0 
			+p\alpha +\cdots\delta\right)}\right\|^2\notag\\	
		&=\left\|\overline{t_{-\overline{\mu+\rho^\vee}}\left(\overline{w}\sigma_j(\nu+\rho) - \frac{p}{p'}(\mu+\rho^\vee-\overline{\mu+\rho^\vee}) +\frac{p}{p'}\Lambda_0 
			+p\overline{w}\alpha +\cdots\delta\right)}\right\|^2\notag\\	
		&=\left\|\overline{\overline{w}\sigma_j(\nu+\rho) - \frac{p}{p'}(\mu+\rho^\vee) +\frac{p}{p'}\Lambda_0
			+p\overline{w}\alpha +\cdots\delta}\right\|^2\notag\\	
		&=\left\|p\overline{w}\alpha+\overline{w}(\overline{\sigma_j(\nu+\rho)}) - \frac{p}{p'}(\overline{\mu+\rho^\vee}) 
		\right\|^2\notag\\
		&=\left\|p\alpha+\overline{\sigma_j(\nu+\rho)} - \frac{p}{p'}\overline{w}^{-1}(\overline{\mu+\rho^\vee}) 
		\right\|^2\notag\\
		&=\left\|p(\alpha+\overline{\Lambda}_j)+\overline{\sigma}_j\overline{(\nu+\rho)} - \frac{p}{p'}\overline{w}^{-1}(\overline{\mu+\rho^\vee}) 
		\right\|^2\notag\\
		&=\left\|p\overline{\sigma}_j^{-1}(\alpha+\overline{\Lambda}_j)+\overline{(\nu+\rho)} - \frac{p}{p'}\overline{\sigma}_j^{-1}\overline{w}^{-1}(\overline{\mu+\rho^\vee}) 
		\right\|^2.
		\end{align*}
	Adjusting for the factor $2(k+h^\vee) = 2p/p'$ and by using properties of $(-1)^{\len}$ we get
	\begin{align*}
	&{\eta(q)^\ell}\, \ch \mathbf{L}(\gamma_{-w_0(\overline{\lambda})})\notag\\
	&=
	(-1)^{\len(\overline{\sigma}_j)}
	\sum_{\alpha\in \overline{Q}^\vee, \overline{w}\in\overline{W}}
	(-1)^{\len(\overline{\sigma}_j^{-1}\overline{w}^{-1})}
	q^{\frac{pp'}{2}\left\|\overline{\sigma}_j^{-1}(\alpha+\overline{\Lambda}_j)+
	\frac{\overline{\nu+\rho}}{p} - \frac{\overline{\sigma}_j^{-1}\overline{w}^{-1}(\overline{\mu+\rho^\vee})}{p'} 
		\right\|^2},
	\end{align*}
	with $\alpha=0$ and $\overline{w}=1$ providing the unique minimizer to the function 
	\begin{align*}
		(\alpha , \overline{w})\mapsto 
		\frac{pp'}{2}\left\|\overline{\sigma}_j^{-1}(\alpha+\overline{\Lambda}_j)+
		\frac{\overline{\nu+\rho}}{p} - \frac{\overline{\sigma}_j^{-1}\overline{w}^{-1}(\overline{\mu+\rho^\vee})}{p'} 
		\right\|^2
	\end{align*} 
	defined on $\overline{Q}^\vee\times \overline{W}$.
	
	Now, we reindex the summation, noting that $\overline{\sigma}_j(\overline{Q}^\vee +\overline{\Lambda}_j)
	=\overline{Q}^\vee -\overline{\Lambda}_i$.
	
	We thus arrive at the intended expression \eqref{eqn:wchar3}, with $\alpha=-\overline{\Lambda}_i$ and $\overline{w}=\overline{\sigma}_j^{-1}$ providing the unique  minimizer to \eqref{eqn:wchar3expfun}. Note that we have used 
	\eqref{eqn:sgnsigmaj} to replace $(-1)^{\len(\overline\sigma_j)}$.
\end{proof}

\begin{rem}
	In \cite{Kan-torus}, we had considered the ``shifted'' characters. We can now see them as characters of certain $\sW$ algebra modules (up to sign, pure powers of $q$ and $\eta(q)$ factors). In particular, these characters exhibit modular invariance.
\end{rem}

\section{Coloured invariants of torus knots}

Throughout this section, fix a finite-dimensional simple Lie algebra $\overline{\la{g}}$ over $\CC$.

We shall need additional notation beyond what is recalled in the previous section. We will continue to follow the convention that quantities related to $\overline{\la{g}}$ will be denoted by an overline.

Recall that the set of positive roots for $\overline{\la{g}}$ is denoted by $\overline{\Delta}_+$, and we will denote the containment $\alpha\in\overline \Delta_+$ alternately as $\alpha\succ 0$. The set of dominant integral weights by $\overline{P}_+$,  the set of strongly dominant integral weights by $\overline{P}_+^\circ=\overline{\rho}+\overline{P}_+$. 
The irreducible highest-weight modules by $L(\lambda)$ for $\lambda\in \overline{P}_+$, the corresponding weight multiplicities by $m_{\lambda}(\mu)$ for $\mu\in \overline{P}$.

For a formal variable $q$, define a linear map:
\begin{align*}
	\qdim:\CC[\overline{P}]&\rightarrow \CC[\overline{P}] \\
	e^{\mu} &\mapsto q^{(\mu,\overline{\rho})}.
\end{align*}

Denoting the set of weights of $L(\lambda)$ by $\wt(\lambda)$, the formal character of $L(\lambda)$ is:
\begin{align}
	\ch(L(\lambda))
	=\sum_{\mu\in \wt(\lambda)}m_{\lambda}(\mu)e^{\mu}
	=\dfrac{\sum_{\overline w\in \overline W}(-1)^{\len(\overline w)} e^{\overline w(\lambda+\overline{\rho})}}
	{\sum_{\overline w\in \overline W}(-1)^{\len(\overline w)} e^{\overline w\,\overline{\rho}}}
	\in \CC[\overline{P}].
	\label{eqn:char}
\end{align}

Recall that  for all $\lambda\in \overline P_+$ (see the proof of \cite[Cor.\ 24.6]{FulHar} with $e^{t}=q$)
\begin{align*}
	\qdim\left( \sum_{\overline w\in \overline W}(-1)^{\len(\overline w)} e^{\overline w(\lambda+\overline \rho)}\right) = \prod_{\alpha\succ 0} 
	\left( 
	q^{\frac{1}{2}(\lambda+\overline \rho,\alpha)}-
	q^{-\frac{1}{2}(\lambda+\overline \rho,\alpha)}
	\right).
\end{align*}		
Denoting the denominator of \eqref{eqn:char} by $\sD$, we therefore have:
\begin{align}
	\qdim(\sD)&=\sum_{\overline w\in \overline W}(-1)^{\len(\overline w)} q^{(\overline w\, \overline \rho,\overline \rho)}
	=\prod_{\alpha\succ 0} (q^{\frac{1}{2}(\overline \rho,\alpha)}-
	q^{-\frac{1}{2}(\overline \rho,\alpha)})\notag\\
	&=(-1)^{\len(w_0)}q^{-\|\overline \rho \|^2}\prod_{\alpha\succ 0} 
	(1-q^{(\overline \rho,\alpha)}),
	\label{eqn:qdim}
\end{align}
where we have used that $|\overline \Delta_+| = \len(w_0)$.
Further,
\begin{align*}
	\qdim(\ch(L(\lambda))) = 
	\prod_{\alpha\succ 0} 
	\dfrac{
		q^{\frac{1}{2}(\lambda+\overline \rho,\alpha)}-
		q^{-\frac{1}{2}(\lambda+\overline \rho,\alpha)}
	}
	{
		q^{\frac{1}{2}(\overline \rho,\alpha)}-
		q^{-\frac{1}{2}(\overline \rho,\alpha)}
	}.
\end{align*}		

We define the $p$th Adams operation and the corresponding plethysm multiplicities $m_{\lambda,p}^{\mu}$ ($\lambda,\mu\in \overline P_+$, $p\geq 1$) by:
\begin{align}
	\psi_p:\CC[\overline P]&\rightarrow \CC[\overline P] \nonumber\\
	e^{\mu} &\mapsto e^{p\mu},		
	\label{eqn:adams}
	\\
	\psi_p(\ch(L(\lambda)))&=\sum_{\mu\in \overline P_+}m_{\lambda,p}^{\mu}\ch(L(\mu)).
	\label{eqn:plethysm}
\end{align}

Finally, for $\lambda\in \overline P_+$, we define the ribbon twist by 
\begin{align*}
	\theta_{\lambda} = q^{\frac{1}{2}(\lambda,\lambda+2\overline \rho)}.
\end{align*}	

\subsection{The Rosso--Jones formula}

Let $p,p'\ge 1$ be a pair of coprime integers, and let $T(p,p')$ be the corresponding torus knot with writhe $pp'$ (see \cite{Mor-coloured}).
Fix $\lambda\in\overline P_+$. The (un-normalized, framing dependent) invariant for $T(p,p')$ coloured by $L(\lambda)$ is \cite{RosJon-torus}, \cite{Mor-coloured}, \cite{GarVuo}:
\begin{align}
	\jones_{T(p,p')}(\lambda) = \sum_{\mu\in\overline  P_+}m_{\lambda,p}^\mu
	\cdot \theta_{\mu}^{p'/p}\cdot \qdim(\ch(L(\mu))).
	\label{eqn:RossoJones}
\end{align}
Note that the corresponding formula \cite[Eq.\ (5)]{GarVuo} involves the extra factor $$\frac{\theta_\lambda^{-pp'}}{\qdim(\ch(L(\lambda)))}$$ as well. However,  
we ignore this factor, in effect fixing an appropriate framing and ``un''-normalizing the invariant.
Straight-forwardly generalizing Morton's argument \cite{Mor-coloured}, we can put \eqref{eqn:RossoJones} in a form more amenable to be compared with principal $\sW$ algebra characters. In the $\overline{\la{g}}=\la{sl}_r$ case with $\lambda=n\overline \Lambda_1$, this was explained in \cite{Kan-torus}. The argument is exactly the same in general, and we provide it here for completeness, closely following \cite{Kan-torus} (which, in turn, followed Morton's work \cite{Mor-coloured}).

\begin{thm}
	For $\lambda\in \overline{P}_+$, and coprime $p,p'\geq 1$, the formula for the coloured invariant can be written as:
	\begin{align}
		&\jones_{T(p,p')}(\lambda)
		=\dfrac{q^{-\frac{pp'}{2}\left(\frac{1}{p}-\frac{1}{p'}\right)^2\|\overline\rho\|^2} }{\prod_{\alpha\succ 0 }(1-q^{(\alpha,\overline\rho)})}
		\sum_{\substack{\mu\in \wt(\lambda)\\ \overline w\in \overline W}}m_{\lambda}(\mu)\cdot (-1)^{\len(\overline w)} \cdot q^{\frac{pp'}{2}\left\| \mu +\frac{\overline \rho}{p}-\overline{w}\frac{\overline\rho}{p'}\right\|^2}.
		\label{eqn:jones}
	\end{align}
\end{thm}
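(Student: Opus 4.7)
My plan is to follow Morton's strategy from \cite{Mor-coloured}: expand each $\qdim(\ch(L(\mu)))$ via the Weyl character formula, absorb the framing twist into the $\overline W$-sum using the $\overline W$-invariance of $\|\mu+\overline\rho\|^2$, complete the square to produce a Gaussian-type exponent, and then reindex via the plethysm identity. For expositional convenience I start from the equivalent Rosso--Jones expansion $\jones_{T(p,p')}(\lambda)=\sum_\mu m_{\lambda,p'}^\mu\,\theta_\mu^{p/p'}\,\qdim(\ch(L(\mu)))$ obtained from the isotopy $T(p,p')\simeq T(p',p)$.

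The first step is to rewrite $\theta_\mu^{p/p'}\qdim(\ch(L(\mu)))$. Expanding $\qdim(\ch(L(\mu)))$ using \eqref{eqn:char}, writing $\theta_\mu^{p/p'}=q^{\frac{p}{2p'}(\|\mu+\overline\rho\|^2-\|\overline\rho\|^2)}$, and replacing $\|\mu+\overline\rho\|^2$ by $\|\overline w(\mu+\overline\rho)\|^2$ in each term via $\overline W$-invariance, the exponent becomes $\frac{p}{2p'}\|\overline w(\mu+\overline\rho)\|^2+(\overline w(\mu+\overline\rho),\overline\rho)-\frac{p}{2p'}\|\overline\rho\|^2$. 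Completing the square with respect to $\overline w(\mu+\overline\rho)$ turns this into $\frac{p}{2p'}\|\overline w(\mu+\overline\rho)+\tfrac{p'}{p}\overline\rho\|^2$ minus the constant $\frac{p'}{2p}\|\overline\rho\|^2+\frac{p}{2p'}\|\overline\rho\|^2$. I then introduce the $\CC$-linear map $\Psi(e^\eta)=q^{\frac{p}{2p'}\|\eta+\frac{p'}{p}\overline\rho\|^2}$ on $\CC[\overline P]$, so that $\theta_\mu^{p/p'}\qdim(\ch(L(\mu)))$ equals an explicit $q$-constant times $\qdim(\sD)^{-1}\,\Psi(\sD\cdot\ch(L(\mu)))$.

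I then sum against $m_{\lambda,p'}^\mu$ and apply the plethysm identity $\sum_\mu m_{\lambda,p'}^\mu\,\sD\cdot\ch(L(\mu))=\sD\cdot\psi_{p'}(\ch(L(\lambda)))=\sum_{\mu\in\wt(\lambda),\,\overline w\in\overline W}m_\lambda(\mu)(-1)^{\len(\overline w)}e^{p'\mu+\overline w\overline\rho}$. Applying $\Psi$ term by term and factoring $p'$ out of the resulting norm converts the Gaussian exponent into $\frac{pp'}{2}\|\mu+\overline w\overline\rho/p'+\overline\rho/p\|^2$. Reindexing the $\overline W$-sum by $\overline w\leftrightarrow \overline w\,w_0$ (for which $(\overline w\,w_0)\overline\rho=-\overline w\overline\rho$ and $(-1)^{\len(\overline w w_0)}=(-1)^{\len(w_0)}(-1)^{\len(\overline w)}$) flips the sign of the $\overline w\overline\rho/p'$ term and produces an extra $(-1)^{\len(w_0)}$ that exactly cancels the corresponding sign in $\qdim(\sD)=(-1)^{\len(w_0)}q^{-\|\overline\rho\|^2}\prod_{\alpha\succ 0}(1-q^{(\alpha,\overline\rho)})$ coming from \eqref{eqn:qdim}.

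Finally, I collect the three $q^{\|\overline\rho\|^2}$ contributions: the $+\|\overline\rho\|^2$ from $\qdim(\sD)^{-1}$ together with the $-\frac{p'}{2p}\|\overline\rho\|^2-\frac{p}{2p'}\|\overline\rho\|^2$ from completing the square. A brief arithmetic check gives $1-\frac{p'}{2p}-\frac{p}{2p'}=-\frac{pp'}{2}\left(\frac{1}{p}-\frac{1}{p'}\right)^2$, producing exactly the prefactor $q^{-\frac{pp'}{2}(1/p-1/p')^2\|\overline\rho\|^2}$ of \eqref{eqn:jones}; the remaining factor $\prod_{\alpha\succ 0}(1-q^{(\alpha,\overline\rho)})$ sits in the denominator. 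The main obstacle is purely computational---careful bookkeeping of the signs and of these three constant powers of $q^{\|\overline\rho\|^2}$ picked up along the way.
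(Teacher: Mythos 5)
Your proposal is correct and follows essentially the same Morton-style argument as the paper: the Weyl character formula for $\qdim(\ch(L(\mu)))$, the plethysm identity $\sum_\mu m^{\mu}_{\lambda,\bullet}\,\sD\cdot\ch(L(\mu))=\sum_{\mu,\overline w}(-1)^{\len(\overline w)}m_\lambda(\mu)e^{\bullet\mu+\overline w\,\overline\rho}$, completing the square, and the $\overline w\mapsto \overline w w_0$ reindexing, with the same final bookkeeping of the three $q^{\|\overline\rho\|^2}$ contributions. The only (cosmetic) differences are that you start from the $T(p',p)$ form of the Rosso--Jones sum, which lets you skip the paper's extra reindexing step via $m_\lambda(\mu)=m_\lambda(\overline w\mu)$, and that you package the coefficient extraction as the linear Gaussian operator $\Psi$ instead of the paper's explicit $M^{\mu''}_{\lambda,p}$ bookkeeping.
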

\begin{proof}
	Firstly, using \eqref{eqn:char}, \eqref{eqn:adams} and \eqref{eqn:plethysm}, we may write, for $\lambda\in\overline P_+$,
	\begin{align}
		&\left(\sum_{\overline w \in \overline W }(-1)^{\len(\overline w)}e^{\overline w\,\overline\rho} \right)\psi_p(\ch(L(\lambda)))=
		\sum_{\substack{\mu\in \wt(\lambda)\\\overline w\in \overline W}}(-1)^{\len(\overline w)}m_{\lambda}(\mu) e^{p\mu+\overline w\,\overline \rho}\notag\\
		&\quad=\sum_{\substack{\mu\in \overline P_+\\ \overline w\in \overline W}}(-1)^{\len(\overline w)} m_{\lambda,p}^{\mu}e^{\overline w(\mu+\overline \rho)}
		=\sum_{\substack{\mu'\in \overline P_+^\circ\\ \overline w \in \overline W}}(-1)^{\len(\overline w)} m_{\lambda,p}^{\mu'-\overline \rho}e^{\overline w\mu'}.
		\label{eqn:plethysm1}
	\end{align}
	Now, any $\mu''\in \overline W\,\overline P_+^\circ$ can be uniquely written as $\mu''=\overline w\mu'$ with $\overline w \in \overline W$ and $\mu\in\overline  P_+^\circ$ (which follows from \cite[Lem.\ 13.2A]{Hum-la}), and therefore we define, for $\mu'' \in \overline W\, \overline P_+^\circ$:
	\begin{align*}
		M^{\mu''}_{\lambda,p} = (-1)^{\len(\overline w)} m_{\lambda,p}^{\mu'-\overline \rho}.
	\end{align*}
	Further, for $\mu''\in \overline P\backslash \overline W\,\overline P_+^\circ$, we define
	\begin{align*}
		M^{\mu''}_{\lambda,p} = 0,
	\end{align*}
	since the coefficient of such $e^{\mu''}$ is $0$ on the RHS of \eqref{eqn:plethysm1} (and thus, in each expression of \eqref{eqn:plethysm1}).
	With this, we may write \eqref{eqn:plethysm1} as:
	\begin{align*}
		\sum_{\substack{\mu\in \wt(\lambda)\\\overline w \in \overline W}}(-1)^{\len(\overline w)}m_{\lambda}(\mu) e^{p\mu+\overline w\,\overline \rho}
		&
		=\sum_{\substack{\mu''\in \overline W\,\overline P_+^\circ}}M^{\mu''}_{\lambda,p}e^{\mu''}
		=\sum_{\substack{\mu''\in \overline P}}M^{\mu''}_{\lambda,p}e^{\mu''}
	\end{align*}
	equivalently for $\mu''\in \overline P$, $\lambda\in\overline  P_+$,
	\begin{align*}
		M^{\mu''}_{\lambda,p} = \coeff_{e^{\mu''}}\left(\sum_{\substack{\mu\in \wt(\lambda)\\\overline w \in \overline W}}(-1)^{\len(\overline w)}m_{\lambda}(\mu) e^{p\mu+\overline w\,\overline \rho}\right). 
	\end{align*}
	Now we see:
	\begin{align*}
		\jones&{}_{T(p,p')}(\lambda) = \sum_{\mu\in \overline P_+}m_{\lambda,p}^\mu
		\cdot \theta_{\mu}^{p'/p}\cdot \qdim(\ch(L(\mu)))
		\nonumber\\
		&=
		\dfrac{1}{\qdim(\sD)}\sum_{\substack{\mu\in \overline P_+,\overline w \in \overline W}}(-1)^{\len(\overline w)} \cdot m_{\lambda,p}^\mu
		\cdot q^{\frac{p'}{2p}(\mu,\mu+2\overline \rho)}\cdot  q^{(\overline w(\mu+\overline \rho),\overline \rho)}
		\nonumber\\
		&=
		\dfrac{1}{\qdim(\sD)}\sum_{\substack{\mu'\in \overline P_+^\circ,\overline w \in \overline W}}(-1)^{\len(\overline w)} \cdot m_{\lambda,p}^{\mu'-\overline \rho}
		\cdot q^{\frac{p'}{2p}(\mu'-\overline \rho,\mu'+\overline \rho)}\cdot  q^{(\overline w\mu',\overline \rho)}
		\nonumber\\
		&=
		\dfrac{q^{-\frac{p'}{2p}\|\overline \rho\|^2}}{\qdim(\sD)}\sum_{\substack{\mu'\in \overline P_+^\circ,\overline w \in \overline W}}(-1)^{\len(\overline w)} \cdot m_{\lambda,p}^{\mu'-\overline \rho}
		\cdot q^{\frac{p'}{2p}\|\overline w\mu'\|^2}\cdot  q^{(\overline w\mu',\overline \rho)}\\
		&=
		\dfrac{q^{-\frac{p'}{2p}\|\overline \rho\|^2}}{\qdim(\sD)}\sum_{\substack{\mu''\in \overline W\,\overline P_+^\circ}}M^{\mu''}_{\lambda,p}
		\cdot q^{\frac{p'}{2p}\| \mu''\|^2}\cdot  q^{(\mu'',\overline \rho)}
		\nonumber\\
		&=
		\dfrac{q^{-\frac{p'}{2p}\|\overline \rho\|^2}}{\qdim(\sD)}\sum_{\substack{\mu''\in \overline P}}M^{\mu''}_{\lambda,p}
		\cdot q^{\frac{p'}{2p}\|\mu''\|^2}\cdot  q^{(\mu'',\overline \rho)}
		\nonumber\\
		&=
		\dfrac{q^{-\frac{p'}{2p}\|\overline \rho\|^2}}{\qdim(\sD)}\sum_{\substack{\mu''\in \overline P}}\coeff_{e^{\mu''}}\left(\sum_{\substack{\mu\in \wt(\lambda)\\\overline w \in \overline W}}(-1)^{\len(\overline w)}m_{\lambda}(\mu) e^{p\mu+\overline w\,\overline \rho}\right)
		\cdot q^{\frac{p'}{2p}\|\mu''\|^2+(\mu'',\overline \rho)}
		\nonumber\\
		&=\dfrac{q^{-\frac{p'}{2p}\|\overline \rho\|^2}}{\qdim(\sD)}
		\sum_{\substack{\mu\in \wt(\lambda)\\\overline w \in \overline W}}(-1)^{\len(\overline w)}m_{\lambda}(\mu) 
		\cdot q^{\frac{p'}{2p}\|p\mu+\overline w\,\overline \rho\|^2+(p\mu+\overline w\,\overline \rho,\overline \rho)}.
	\end{align*}
	Now we note that $\overline{w} \wt(\lambda)=\wt(\lambda)$ and $m_{\lambda}(\mu)=m_{\lambda}(\overline w\mu)$ for all $\overline w \in \overline W$. Reindexing the sum,  and using the invariance of $\|\cdot\|^2$ with respect to $\overline W$, we get:
	\begin{align*}
		\dfrac{q^{-\frac{p'}{2p}\|\overline \rho\|^2}}{\qdim(\sD)}
		\sum_{\substack{\mu\in \wt(\lambda)\\\overline w \in \overline W}}(-1)^{\len(\overline w)}m_{\lambda}(\mu) 
		\cdot q^{\frac{p'}{2p}\|p\mu+\overline \rho\|^2+(p\mu+\overline \rho,\overline{w}^{-1}\overline \rho)}
	\end{align*}
	Reindexing the sum over $\overline{W}$ by writing $\overline{w}^{-1}\mapsto \overline{w}w_0$, using $w_0\overline\rho=-\overline\rho$, and the properties of $\len$, this expression becomes:
	\begin{align*}
		\dfrac{(-1)^{\len(w_0)}q^{-\frac{p'}{2p}\|\overline \rho\|^2}}{\qdim(\sD)}
		\sum_{\substack{\mu\in \wt(\lambda)\\\overline w \in \overline W}}(-1)^{\len(\overline w)}m_{\lambda}(\mu) 
		\cdot q^{\frac{p'}{2p}\|p\mu+\overline \rho\|^2-(p\mu+\overline \rho,\overline{w}\,\overline \rho)}.
	\end{align*}
	Using the explicit expression for $\qdim(\sD)$ from \eqref{eqn:qdim} along with a few algebraic manipulations, we now get the required result.
\end{proof}

\begin{rem}\label{rem:pp'invariance}
	The left-hand side of \eqref{eqn:jones} is invariant under $p\longleftrightarrow p'$ since $T(p,p')=T(p',p)$.
	It is an easy exercise to see that the right-hand side of \eqref{eqn:jones} is also invariant under $p\longleftrightarrow p'$.
\end{rem}

\subsection{Minimum exponent}

We now renormalize the Jones polynomial by dividing through by its trailing monomial.
We shall denote this as:
\begin{align*}
	\widehat{\jones} := \frac{\jones}{{\mathrm{trailing\,\,monomial\,\,of\,\,} \jones}}
\end{align*}

To understand $\widehat{\jones}$ we now find conditions on $p,p'$ such that the minimum exponent in \eqref{eqn:jones} occurs at a unique location $(\mu,\overline w)\in \wt(\lambda)\times\overline W$.

We record a useful lemma.
\begin{lem}\label{lem:nonzeromult}
Let $\overline{\la{g}}$ be a any finite-dimensional simple Lie algebra.
Let $\lambda\in \overline P_+\backslash\{0\}$ and let $\mu\in \lambda+\overline Q$. Then, for all $n\gg 0$ satisfying $n\lambda\in \lambda+\overline Q$, we have that:
\begin{align*}
	m_{n\lambda}(\mu) \neq 0.
\end{align*}
\end{lem}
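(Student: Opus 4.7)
The plan is to apply the classical weight-multiplicity criterion: for $\nu\in\overline{P}_+$ and $\xi\in\overline{P}$, the multiplicity $m_\nu(\xi)$ is nonzero if and only if $\nu-\xi^+\in\overline{Q}_+:=\sum_{i=1}^{\ell}\NN\alpha_i$, where $\xi^+$ is the unique dominant $\overline{W}$-conjugate of $\xi$ (see e.g.\ Humphreys, \emph{Introduction to Lie Algebras and Representation Theory}, \S 21.3). Since weight multiplicities are $\overline{W}$-invariant, it is equivalent to check this with $\nu=n\lambda$ and $\xi^+=\mu^+$; the goal is thus to show $n\lambda-\mu^+\in\overline{Q}_+$ for all sufficiently large $n$ in the arithmetic progression $\{n:n\lambda\in\lambda+\overline{Q}\}$.

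First I would check the weaker integrality $n\lambda-\mu^+\in\overline{Q}$. The Weyl group acts trivially on $\overline{P}/\overline{Q}$, so $\mu^+\in\mu+\overline{Q}\subseteq\lambda+\overline{Q}$, and together with the hypothesis $n\lambda\in\lambda+\overline{Q}$ this yields $n\lambda-\mu^+\in\overline{Q}$.

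The main step is positivity. Expanding $\lambda=\sum_i c_i\alpha_i$ and $\mu^+=\sum_i c_i'\alpha_i$ with $c_i,c_i'\in\QQ$, the $\ZZ$-linear independence of the simple roots together with $n\lambda-\mu^+\in\overline{Q}$ forces each coefficient $nc_i-c_i'$ to be an integer; it is therefore enough to arrange $nc_i-c_i'>0$ for all $i$ for $n$ large, and this reduces to showing $c_i>0$ for every $i$. Here I invoke the standard positivity of the inverse Cartan matrix of a finite-type simple Lie algebra: every entry of $\mathsf{C}^{-1}$ is strictly positive. Writing $\lambda=\sum_j n_j\overline{\Lambda}_j$ with $n_j\in\NN$ not all zero then yields $c_i=\sum_j n_j(\mathsf{C}^{-1})_{ji}>0$, as required.

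The only non-trivial input is the positivity of $\mathsf{C}^{-1}$, which is classical; the rest of the argument is bookkeeping, so I do not foresee a genuine obstacle.
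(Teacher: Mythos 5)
Your argument is correct and follows essentially the same route as the paper's: both rest on the saturation criterion of Humphreys \S 21.3 and on the strict positivity of the entries of the inverse Cartan matrix to show that every $\alpha_i$-coordinate of $n\lambda-\mu$ is a strictly positive integer for $n\gg 0$ in the given congruence class. The only cosmetic difference is that you pass to the dominant conjugate $\mu^+$ at the outset, whereas the paper first argues for dominant $\mu$ and reduces the general case to it at the end via $\overline{W}$-invariance of multiplicities.
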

\begin{proof}
	(Since $\mu\in \lambda+\overline Q$, $n\lambda\in \lambda+\overline{Q}$ is a necessary condition for the multiplicity to be non-zero.)
	Let
	\begin{align*}
		\lambda&=\sum_{1\leq i\leq \ell} l_i\overline \Lambda_i,
	\end{align*}
	with $l_i\in\ZZ_{\geq 0}$, at least one of which is strictly positive (since $\lambda\neq 0$). 
	Let 
	\begin{align}
		\overline\Lambda_i = \sum_{1\leq j\leq \ell}D_{ij}\alpha_j.
		\label{eqn:wtstoroots}
	\end{align}
	Then, the matrix $D$ is the inverse transpose of the cartan matrix of $\overline{\la{g}}$. Crucially, all entries of $D$ are strictly positive (rational numbers), see for instance \cite[Table 2]{OniVin}.
	
	Since
	\begin{align*}
		\lambda&=\sum_{1\leq i,j\leq \ell} l_iD_{ij}\alpha_j=\sum_{1\leq j\leq \ell} \left(\sum_{1\leq i\leq \ell}l_iD_{ij}\right)\alpha_j,
	\end{align*}
	each $\alpha_j$ coordinate of $\lambda$ is a strictly positive rational number. Consequently, for the fixed weight $\mu$, and for all large enough integers $n\gg 0$, we must have each $\alpha_j$ coordinate of $n\lambda - \mu$ is a strictly positive rational number.
	
	Now if we restrict to those $n$ such that $n\lambda\in\lambda+ \overline Q$, then, using that $\mu\in \lambda+ \overline Q$, we must have that $n\lambda-\mu \in \overline Q$, in particular, each $\alpha_j$ coordinate of $n\lambda-\mu$ must also be integral.
	
	We have thus established that for all $n\gg 0$ satisfying $n\lambda\in \lambda+\overline Q$, each $\alpha_j$ coordinate of $n\lambda-\mu$ is a strictly positive integer. In particular, $n\lambda- \mu \in \overline{Q}_+$ where
	\begin{align*}
		\overline{Q}_+ = \ZZ_{\geq 0}\alpha_1+\cdots+\ZZ_{\geq 0}\alpha_\ell.
	\end{align*}
	
	If $\mu\in \overline{P}_+$, then this implies $m_{n\lambda}(\mu)\neq 0$, \cite[Sec.\ 13.4, 21.3]{Hum-la}.

	If $\mu\not\in \overline{P}_+$, then, choose $\overline w\in \overline W$ such that $\overline w\mu\in \overline P_+$. Since $\overline w\mu\in \mu+\overline Q=\lambda+\overline Q$, the above argument establishes that for all $n\gg 0$ with $n\lambda\in \lambda+\overline Q$, $m_{n\lambda}(\overline w\mu)\neq 0$. However, $m_{n\lambda}(\mu)=m_{n\lambda}(\overline w\mu)$, which finishes the proof.
\end{proof}

One easy but useful corollary of this result is the following.
\begin{cor}
	\label{cor:limwtset}
	Let $\overline{\la{g}}$ be any finite-dimensional Lie algebra. Let $\lambda\in \overline P_+$, $\lambda\neq 0$. Let $n_1, n_2,\cdots$ be an increasing sequence of positive integers such that $n_i\lambda\in \lambda+\overline{Q}$ for all $i$. Then, we have that:
	\begin{align*}
		\wt(n_1\lambda)\subseteq \wt(n_2\lambda)\subseteq \cdots\quad \mathrm{and}\quad\, \bigcup_{i}\wt(n_i\lambda)=\lambda+\overline{Q}.
	\end{align*}
\end{cor}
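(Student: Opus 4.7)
The plan is to treat the two assertions separately, both leaning on Lemma~\ref{lem:nonzeromult} and, crucially, the key fact extracted in its proof that every $\alpha_j$-coordinate of $\lambda$ (when expanded in the basis of simple roots) is a strictly positive rational number.

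For the nested inclusion $\wt(n_i\lambda) \subseteq \wt(n_{i+1}\lambda)$, I would fix $\mu \in \wt(n_i\lambda)$ and, by Weyl-invariance of weight multiplicities, reduce to the case $\mu \in \overline{P}_+$; then $n_i\lambda - \mu \in Q_+ := \sum_{j=1}^{\ell}\ZZ_{\geq 0}\alpha_j$. Writing
\begin{align*}
n_{i+1}\lambda - \mu = (n_{i+1}-n_i)\lambda + (n_i\lambda - \mu),
\end{align*}
the second summand already lies in $Q_+$. For the first, the proof of Lemma~\ref{lem:nonzeromult} established that every $\alpha_j$-coordinate of $\lambda$ is strictly positive rational, so those of $(n_{i+1}-n_i)\lambda$ are also strictly positive rationals; moreover $(n_{i+1}-n_i)\lambda \in \overline{Q}$, because both $n_{i+1}\lambda$ and $n_i\lambda$ lie in $\lambda + \overline{Q}$, and therefore these coordinates are in fact positive integers. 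Thus $n_{i+1}\lambda - \mu \in Q_+$, and the standard characterization of weights of an irreducible highest-weight module with dominant highest weight yields $\mu \in \wt(n_{i+1}\lambda)$.

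For the equality $\bigcup_i \wt(n_i\lambda) = \lambda + \overline{Q}$, the inclusion $\subseteq$ is immediate since each $\wt(n_i\lambda) \subseteq n_i\lambda + \overline{Q} = \lambda + \overline{Q}$ using $n_i\lambda \in \lambda + \overline{Q}$. Conversely, given $\mu \in \lambda + \overline{Q}$, Lemma~\ref{lem:nonzeromult} guarantees $m_{n\lambda}(\mu) \neq 0$ for all sufficiently large $n$ with $n\lambda \in \lambda + \overline{Q}$; since the $n_i$ form an unbounded sequence of positive integers satisfying this divisibility, some $n_i$ is large enough to place $\mu$ in $\wt(n_i\lambda)$.

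There is no substantial obstacle, as the bulk of the work has already been absorbed into Lemma~\ref{lem:nonzeromult}. The only point requiring care is in the chain argument, where one genuinely uses $n_{i+1} > n_i$ so that $(n_{i+1}-n_i)\lambda$ contributes \emph{strictly} positive coordinates (ensuring integrality forces membership in $Q_+$); steps in which $n_{i+1} = n_i$, should the sequence be only non-decreasing, are of course trivial.
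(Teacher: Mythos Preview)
Your proof is correct and follows essentially the same approach as the paper. The only cosmetic difference is that the paper uses the characterization $\mu\in\wt(n_i\lambda)\Leftrightarrow n_i\lambda\succ\overline{w}\mu$ for all $\overline{w}\in\overline{W}$ directly (so no reduction to dominant $\mu$ is needed), whereas you first pass to the dominant representative and then use $n_i\lambda-\mu\in Q_+$; both routes rest on the same key observation that $(n_{i+1}-n_i)\lambda$ has strictly positive integer simple-root coordinates, and both invoke Lemma~\ref{lem:nonzeromult} for the union statement.
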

\begin{proof}
	Let $\mu\in \wt(n_i\lambda)$, which is equivalent to assuming 
	$n_i\lambda-\overline{w}\mu\in\overline{Q}_+$ for all $\overline w\in \overline W$ (\cite[Sec.\ 21.3]{Hum-la}). Given that $n_i<n_{i+1}$ and $n_i\lambda, n_{i+1}\lambda\in \lambda+\overline{Q}$, we get $n_{i+1}\lambda-n_i\lambda\in\overline{Q}\cap \overline{P}_+$. 
	However, $\overline{Q}\cap \overline{P}_+\subset \overline{Q}_+$, since every entry $D_{ij}$ in \eqref{eqn:wtstoroots} is strictly positive (and rational). Therefore, $n_{i+1}\lambda-n_i\lambda\in\overline{Q}_+$.
	Thus, $n_{i+1}\lambda- \overline w\mu\in \overline{Q}_+$ for all $\overline w\in \overline W$, and hence $\mu\in \wt(n_{i+1}\lambda)$. The second assertion is immediate from the previous lemma.
\end{proof}	

Consider the simply-laced case, i.e., $\overline{\la{g}}$ is of type $\lietype{A}$, $\lietype{D}$, or $\lietype{E}$.  Recall that in this case, several notions simplify; see Section \ref{sec:simplylaced}.
Recall the set $J$ from \eqref{eqn:Jset}. The map
$j\mapsto \overline{\Lambda}_j+\overline{Q}$ is a bijection from $J$ to $\overline{P}/\overline{Q}$ (note that $\overline{\Lambda}_0=0$).
Therefore, given $\lambda\in \overline{P}_+$, there exists a unique $j\in J$ such that:
\begin{align*}
	\lambda \in \overline \Lambda_j+\overline{Q}
\end{align*}
and thus,
\begin{align*}
	\wt(\lambda) \subseteq \overline \Lambda_j+\overline{Q} =
	-\overline \Lambda_i+\overline{Q},
\end{align*}
where $i\in J$ is as in \eqref{eqn:Jij}.
Now, as long as $-\overline{\Lambda}_i$ appears in $\wt(\lambda)$ (i.e., $m_{\lambda}(-\overline\Lambda_i)\neq 0$), we may use Proposition \ref{cor:Wminexp} to pin down the minimum exponent term in \eqref{eqn:jones}.

Using our analysis of $\sW$ algebra characters, we now arrive at the following theorem.
\begin{thm}
	\label{thm:ADEjones_hat}
	Let $\overline{\la{g}}$ be simply-laced and let $p,p'\geq h^\vee$ be coprime positive integers (where $h^\vee$ is the dual Coxeter number).
	
	Let $\lambda\in \overline P_+$ be non-zero. 
	Let $j\in J$ be such that $\lambda\in \overline{\Lambda}_j+\overline Q$. 
	
	Then, for all $n\gg 0$ with $n\lambda\in \lambda+\overline{Q}$, we have that:
	
	\begin{align}
	\widehat{\jones}_{T(p,p')}(n\lambda) &= 
	\dfrac{(-1)^{2(\overline\Lambda_j,\overline{\rho})}
		q^{-\frac{pp'}{2}\left\|\overline\Lambda_j +\overline{\sigma}_j\frac{\overline{\rho}}{p}
		-\frac{\overline{\rho}}{p'} \right\|^2} }
	{\prod_{\alpha\succ 0 }(1-q^{(\alpha,\overline\rho)})}\notag\\
	&\,\,\times
	\sum_{\substack{\mu\in \wt(n\lambda)\\ \overline w\in \overline W}}\dfrac{m_{n\lambda}(\mu)}{m_{n\lambda}(-\overline\Lambda_i)}\cdot (-1)^{\len(\overline w)} \cdot q^{\frac{pp'}{2}\left\| \mu +\frac{\overline \rho}{p}-\overline{w}\frac{\overline\rho}{p'}\right\|^2}.
	\label{eqn:jones_hat}
	\end{align}
\end{thm}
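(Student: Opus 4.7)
The strategy is to begin from the Rosso--Jones form \eqref{eqn:jones}, pin down the trailing monomial of $\jones_{T(p,p')}(n\lambda)$ explicitly, and then divide through. The prefactor $q^{-\frac{pp'}{2}(1/p-1/p')^2 \|\overline{\rho}\|^2}/\prod_{\alpha \succ 0}(1-q^{(\alpha,\overline{\rho})})$ has trailing monomial $q^{-\frac{pp'}{2}(1/p-1/p')^2 \|\overline{\rho}\|^2}$, since each factor of the infinite product begins with $1$ and all subsequent powers of $q$ appearing are strictly positive. So the remaining task reduces to locating the minimum of
\[ E(\mu, \overline{w}) \,:=\, \tfrac{pp'}{2}\bigl\|\mu + \tfrac{\overline{\rho}}{p} - \overline{w}\tfrac{\overline{\rho}}{p'}\bigr\|^2 \]
on $\{(\mu, \overline{w}) \in \wt(n\lambda) \times \overline{W} : m_{n\lambda}(\mu) \neq 0\}$, together with the accompanying sign and multiplicity coefficient.

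To locate the minimizer the main idea is to invoke Corollary \ref{cor:Wminexp}. Since $\overline{\la{g}}$ is simply-laced, $h = h^\vee$ and $r^\vee = 1$, so coprimality together with $p, p' \geq h^\vee$ makes $k = p/p' - h^\vee$ a non-degenerate principal admissible level. Specializing the parameters to $\nu = (p-h^\vee)\Lambda_0 \in P^{p-h^\vee}_+$ and $\mu_0 = (p'-h^\vee)\Lambda_0 \in P^{\vee,\, p'-h}_+$ (both of which have vanishing classical part, and $\overline{\rho^\vee} = \overline{\rho}$, $\overline{Q}^\vee = \overline{Q}$ in the simply-laced case), the exponent function \eqref{eqn:wchar3expfun} of that corollary coincides with $E$, and the corollary asserts that $E$ attains its unique minimum on $(\overline{Q} - \overline{\Lambda}_i) \times \overline{W}$ at $(\mu, \overline{w}) = (-\overline{\Lambda}_i, \overline{\sigma}_j^{-1})$.

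I then verify that this minimizer lies inside the index set of \eqref{eqn:jones} and contributes a nonzero multiplicity. Using $\lambda \in \overline{\Lambda}_j + \overline{Q}$ and $\overline{\Lambda}_j + \overline{\Lambda}_i \in \overline{Q}$ from \eqref{eqn:Jij}, one has $\wt(n\lambda) \subseteq n\lambda + \overline{Q} = -\overline{\Lambda}_i + \overline{Q}$ whenever $n\lambda \in \lambda + \overline{Q}$, so the sum in \eqref{eqn:jones} indexes a subset of the set in Corollary \ref{cor:Wminexp}. Since $-\overline{\Lambda}_i \in \lambda + \overline{Q}$, Lemma \ref{lem:nonzeromult} guarantees $m_{n\lambda}(-\overline{\Lambda}_i) \neq 0$ for all $n \gg 0$ with $n\lambda \in \lambda + \overline{Q}$, so the global minimizer is actually attained. $\overline{W}$-invariance of $\langle \cdot,\cdot\rangle$ together with $\overline{\sigma}_j^{-1}\overline{\Lambda}_j = -\overline{\Lambda}_i$ rewrites the minimum value as $\tfrac{pp'}{2}\|\overline{\Lambda}_j + \overline{\sigma}_j\overline{\rho}/p - \overline{\rho}/p'\|^2$, and \eqref{eqn:sgnsigmaj} identifies the accompanying sign $(-1)^{\len(\overline{\sigma}_j^{-1})}$ with $(-1)^{2(\overline{\Lambda}_j, \overline{\rho})}$. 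Dividing \eqref{eqn:jones} by the resulting trailing monomial and absorbing the factor $1/m_{n\lambda}(-\overline{\Lambda}_i)$ into the sum produces \eqref{eqn:jones_hat}.

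The one conceptually substantive input is the uniqueness of the minimizer, which is imported from Corollary \ref{cor:Wminexp} and therefore ultimately from the BGG-type resolution \eqref{eqn:WBGG} and the non-vanishing $H^0_-(\mathbb{L}(\lambda)) \neq 0$ of Theorem \ref{thm:H0-}; the coset tracking, the use of Lemma \ref{lem:nonzeromult}, and the concluding algebraic rearrangement are all routine by comparison.
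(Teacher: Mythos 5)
Your proposal is correct and follows essentially the same route as the paper's proof: specialize Corollary \ref{cor:Wminexp} to $\nu=(p-h^\vee)\Lambda_0$, $\mu=(p'-h)\Lambda_0$ to identify the unique minimizer $(-\overline{\Lambda}_i,\overline{\sigma}_j^{-1})$ of the exponent, use the coset containment $\wt(n\lambda)\subseteq-\overline{\Lambda}_i+\overline{Q}$ and Lemma \ref{lem:nonzeromult} to ensure that term actually appears for $n\gg0$, and then divide by the resulting trailing monomial, simplifying the sign and exponent via \eqref{eqn:sgnsigmaj} and \eqref{eqn:Jij}. No gaps.
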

\begin{proof}
	Since $n\lambda\in\overline \Lambda_j+\overline Q$, we have $\wt(n\lambda)\subseteq \overline{\Lambda}_j+\overline Q=-\overline{\Lambda}_i+\overline{Q}$. From Lemma \ref{lem:nonzeromult} we choose $n\gg 0$ such that $-\overline\Lambda_i\in \wt(n\lambda)$ (i.e., $m_{n\lambda}(-\overline\Lambda_i)\neq 0$).
	Thus, for $\jones_{T(p,p')}(n\lambda)$, the sum in \eqref{eqn:jones} is over a subset of $(-\overline{\Lambda}_i+\overline{Q}) \times\overline W$ which includes the point $(-\overline\Lambda_i,\overline \sigma_j^{-1})$.
	
	Taking $\nu= (p-h^\vee)\Lambda_0$ and $\mu=(p'-h)\Lambda_0$ in Proposition \ref{cor:Wminexp} (the $\mu$ there should not be confused with the $\mu$ in \eqref{eqn:jones_hat}), we thus see that the 
	trailing monomial in $\jones_{T(p,p')}(n\lambda)$ is contributed uniquely by the term corresponding to $\alpha=-\overline\Lambda_i, \overline{w}=\overline \sigma_j^{-1}$.
	
	Therefore, the trailing monomial in $\jones_{T(p,p')}(n\lambda)$ in \eqref{eqn:jones} equals
	\begin{align*}
		q^{-\frac{pp'}{2}\left(\frac{1}{p}-\frac{1}{p'}\right)\|\overline\rho\|^2}
		\cdot 
		m_{n\lambda}(-\overline{\Lambda}_i)\cdot (-1)^{\len(\overline\sigma_j)}
		\cdot 
		q^
		{\frac{pp'}{2}\left \| -\overline\Lambda_i +\frac{\overline\rho}{p} - \overline \sigma_j^{-1}\frac{\overline \rho}{p'} \right\|^2}.
	\end{align*}
	Using \eqref{eqn:sgnsigmaj} and \eqref{eqn:Jij} now gets us the result.
\end{proof}

\begin{rem}
	Under the conditions of the theorem above, if $\lambda\in \overline Q \cap \overline P_+$, (equivalently, if $j=0$), then the trailing monomial of $\jones_{T(p,p')}(n\lambda)$ is $m_{n\lambda}(0)q^0$, uniquely contributed by $\alpha=-\overline\Lambda_i=0$ and $\overline w = \overline \sigma_j^{-1}=1$ from \eqref{eqn:jones}. In this case, the result holds for all $n\geq 1$ since $0\in \wt(\lambda)$. 
\end{rem}
\begin{rem}
	Recall that this is an un-normalized, framing-dependent invariant, and this makes the degree of the trailing monomial to be eventually fixed (under the conditions of the theorem).
\end{rem}

This provides a satisfactory answer in the simply-laced case. However, the non-simply-laced case is tricky, since the coloured Jones polynomial is not a truncation of the corresponding $\sW$ algebra characters. Here, we have to resort to some direct calculations, as the following theorem shows.

\begin{thm}\label{thm:minexp}
Let $\overline{\la{g}}$ be any finite-dimensional simple Lie algebra, not necessarily simply-laced.
Let
\begin{align*}
	\lambda\in \overline P_+\cap \overline Q 
\end{align*}
and let $d$ be the length of short roots in $\la{g}$.
Suppose that $p,p'$ are non-negative co-prime integers such that
\begin{align}
	\frac{1}{p}+\frac{1}{p'} < \dfrac{d}{2\|\overline\rho\|}.
	\label{cond:pp'large}
\end{align}			
Then the trailing monomial for $\jones_{T(p,p')}(\lambda)$ is $m_{\lambda}(0)q^{0}$. The only contribution to this term in the equation \eqref{eqn:jones} is due to the summand corresponding to $\mu=0\in \wt(\lambda)$ and $\overline w=1$. We thus have that
	\begin{align*}
	&\widehat{\jones}_{T(p,p')}(\lambda)
	=\dfrac{q^{-\frac{pp'}{2}\left(\frac{1}{p}-\frac{1}{p'}\right)^2\|\overline\rho\|^2} }{\prod_{\alpha\succ 0 }(1-q^{(\alpha,\overline\rho)})}
	\sum_{\substack{\mu\in \wt(\lambda)\\ \overline w\in \overline W}}\frac{m_{\lambda}(\mu)}{m_{\lambda}(0)}\cdot (-1)^{\len(\overline w)} \cdot q^{\frac{pp'}{2}\left\| \mu +\frac{\overline \rho}{p}-\overline{w}\frac{\overline\rho}{p'}\right\|^2}.
	\end{align*}
\end{thm}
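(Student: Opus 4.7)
The goal is to identify $(\mu,\overline w)=(0,1)$ as the unique global minimizer of the exponent
\[
E(\mu,\overline w) := \tfrac{pp'}{2}\Bigl\|\mu + \tfrac{\overline\rho}{p} - \overline w\tfrac{\overline\rho}{p'}\Bigr\|^2
\]
on $\wt(\lambda)\times\overline W$ appearing in the sum \eqref{eqn:jones}. A direct expansion yields $E(0,1) = \tfrac{pp'}{2}\bigl(\tfrac{1}{p}-\tfrac{1}{p'}\bigr)^2\|\overline\rho\|^2$, which is precisely what the prefactor in \eqref{eqn:jones} cancels; since $\lambda\in\overline P_+\cap\overline Q$ implies $0\in\wt(\lambda)$, we have $m_\lambda(0)\neq 0$, and the denominator $\prod_{\alpha\succ 0}(1-q^{(\alpha,\overline\rho)})$ has trailing monomial $1$. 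Thus, once uniqueness is established, the trailing monomial of $\jones_{T(p,p')}(\lambda)$ equals $m_\lambda(0)q^0$, contributed solely by $(\mu,\overline w)=(0,1)$, and the formula for $\widehat\jones_{T(p,p')}(\lambda)$ follows by dividing \eqref{eqn:jones} through.

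The central algebraic identity, obtained by direct expansion and using $\|\overline w\overline\rho\|=\|\overline\rho\|$, is
\[
E(\mu,\overline w) - E(0,1) = \tfrac{pp'}{2}\|\mu\|^2 + p'(\mu,\overline\rho) - p(\mu,\overline w\overline\rho) + (\overline\rho,\overline\rho - \overline w\overline\rho).
\]
The last term equals $\sum_{\alpha\succ 0,\,\overline w^{-1}\alpha\prec 0}(\overline\rho,\alpha)\geq 0$ via the standard inversion-set description of $\overline\rho-\overline w\overline\rho$, with strict inequality unless $\overline w=1$ (since $(\overline\rho,\alpha)>0$ for every $\alpha\succ 0$). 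This immediately settles the case $\mu=0$: $E(0,\overline w)>E(0,1)$ for $\overline w\neq 1$.

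For $\mu\neq 0$, I drop the nonnegative last term and apply Cauchy--Schwarz to bound
\[
|p'(\mu,\overline\rho) - p(\mu,\overline w\overline\rho)| \leq (p+p')\|\mu\|\,\|\overline\rho\|,
\]
obtaining $E(\mu,\overline w)-E(0,1)\geq \|\mu\|\bigl(\tfrac{pp'}{2}\|\mu\| - (p+p')\|\overline\rho\|\bigr)$. Since $\mu\in\wt(\lambda)\subseteq\lambda+\overline Q=\overline Q$ and $\mu\neq 0$, the crucial metric input is $\|\mu\|\geq d$: the shortest nonzero vectors of the root lattice $\overline Q$ are precisely the short roots (a type-by-type check; e.g.\ for $C_\ell$ one has $\overline Q=\{v\in\ZZ^\ell:\sum v_i\in 2\ZZ\}$, whose shortest nonzero elements are the $\pm e_i\pm e_j$ of length $\sqrt 2=d$, and the other non-simply-laced types are handled analogously). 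The hypothesis \eqref{cond:pp'large} rearranges to $\tfrac{pp'd}{2}>(p+p')\|\overline\rho\|$, so $\tfrac{pp'}{2}\|\mu\|\geq\tfrac{pp'd}{2}>(p+p')\|\overline\rho\|$, forcing $E(\mu,\overline w)>E(0,1)$.

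The main obstacle is the elementary-but-type-dependent lattice length bound $\|\mu\|\geq d$ for nonzero $\mu\in\overline Q$. For simply-laced types it is trivial since $\overline Q$ is generated by roots of the common length $d$; for $B_\ell, C_\ell, F_4, G_2$ it follows from explicit descriptions of the root lattice. Once this bound is granted, the remainder of the argument is a uniform norm estimate that simultaneously pins down the unique minimizer $(0,1)$, identifies the trailing monomial $m_\lambda(0)q^0$, and delivers the displayed formula for $\widehat\jones$.
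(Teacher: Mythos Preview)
Your proof is correct and follows essentially the same approach as the paper: both arguments reduce to the lattice bound $\|\mu\|\geq d$ for nonzero $\mu\in\overline Q$ (which the paper cites as \cite[Prop.~5.10]{Kac-book} rather than verifying type by type) combined with a norm estimate driven by the hypothesis \eqref{cond:pp'large}. The only difference is packaging: the paper frames it geometrically, showing each $\mu_{\overline w}=-\overline\rho/p+\overline w\,\overline\rho/p'$ has $\|\mu_{\overline w}\|<d/2$ so that $0$ is the unique closest lattice point via the reverse triangle inequality, and then minimizes $\|\mu_{\overline w}\|$ over $\overline w$; you instead expand $E(\mu,\overline w)-E(0,1)$ directly and bound the cross terms by Cauchy--Schwarz, handling the $\mu=0$ case with the inversion-set identity for $\overline\rho-\overline w\,\overline\rho$.
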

\begin{proof}
	Fix $\lambda\in \overline P_+\cap \overline Q, p,p'$ as in the statement of the theorem.
	Note that $\wt(\lambda)\subseteq \overline Q$.
	
	For any $\overline w\in \overline W$, let us denote
	\begin{align*}
		\mu_{\overline w} = - \frac{\overline \rho}{p} + \overline w \frac{\overline \rho}{p'}.
	\end{align*}
	
	Fix $\overline w \in \overline W$. We claim that under the given conditions on $p,p'$, the closest element of $\overline Q$ to $\mu_{\overline w}$ is $0$.
	We have that:
	\begin{align}
		\|\mu_{\overline w}\|^2= \left(\frac{1}{p^2} + \frac{1}{p'^2} \right)\|\overline\rho\|^2 - \frac{2}{pp'}(\overline \rho, \overline w\, \overline \rho)\leq \left(\frac{1}{p}+\frac{1}{p'}\right)^2\|\overline\rho\|^2
		< \frac{d^2}{4}.
		\label{eqn:lenmuw}
	\end{align}
	Note that for all  $\mu\in \overline {Q}\backslash \{0\}$, $\|\mu\| \geq d$ (this follows from \cite[Prop.\ 5.10]{Kac-book}). Thus, we have, for all $\mu\in \overline Q\backslash \{0\}$:	
	\begin{align*}
		\|\mu-\mu_{\overline w}\|\geq \|\mu\| -\|\mu_{\overline w}\|>\frac{d}{2}.
	\end{align*}
	This means that the minimum of $\|\mu-\mu_{\overline w}\|^2$ as $\mu$ varies over $\overline Q$ is achieved uniquely when $\mu=0$, and the minimum equals $\|\mu_{\overline w}\|^2$.
	
	Now, vary $\overline w\in \overline W$. From \eqref{eqn:lenmuw}, it is clear that $\|\mu_{\overline w}\|^2$ is the smallest iff $\overline w \,\overline \rho =\overline \rho$. This happens iff $\overline w = 1$.
	
	Note that since $\lambda\in \overline P_+ \cap \overline Q$, $0\in \wt(\lambda)$. The analysis above shows that the trailing monomial in $\jones_{T(p,p')}(\lambda)$ in \eqref{eqn:jones} is uniquely contributed by the term corresponding to $\mu=0$ and $\overline w=1$. This immediately proves what we need.
\end{proof}	
\begin{rem}\label{rem:minexp}
	The bound \eqref{cond:pp'large} is quite crude when $\overline{\la{g}}$ is simply-laced. Even when $\overline{\la{g}}=\la{sl}_3$, $p=3, p'=4$, it fails, but these values are allowed in Theorem \ref{thm:ADEjones_hat} since $h^\vee=3$ for $\la{sl}_3$. 
\end{rem}

\subsection{Limits}
It is now time to consider large colour limits of coloured invariants of torus knots.

\begin{defi}
\label{def:relasympmul}
Let $\overline{\la{g}}$ be any finite-dimensional simple Lie algebra (not necessarily simply-laced), $\lambda \in \overline P_+\backslash \{0\}$. We say that $\lambda$ satisfies the \emph{relative asymptotic multiplicity condition} if for all $\mu_1, \mu_2 \in \lambda+\overline{Q}$ we have:
\begin{align}
	\lim_{\substack{n\rightarrow\infty \\ n\lambda\in \lambda+\overline{Q}}} \dfrac{m_{n\lambda}(\mu_1)}{m_{n\lambda}(\mu_2)} = 1.
	\label{eqn:relasympmul}
\end{align}	
Note that for small values of $n$, the quotient of multiplicities may be undefined or zero, but it eventually becomes well-defined and non-zero thanks to Lemma \ref{lem:nonzeromult}. Also, we must restrict to $n\lambda\in \lambda+\overline Q$, otherwise both quantities in the fraction are necessarily zero.

Conjecture \ref{conj:main} below speculates that all weights
$\lambda \in \overline P_+\backslash \{0\}$ satisfy this condition.
\end{defi}

\begin{thm}
	\label{thm:ADElimits}
	Let $\overline{\la{g}}$ be a rank $\ell$ finite-dimensional simple Lie algebra, $p,p'$ be coprime positive integers, and let $\lambda\in\overline{P}_+\backslash\{0\}$.
	
	\begin{enumerate}
		\item If $\overline{\la{g}}$ is simply-laced, $p,p'\geq h^\vee$, and $0\neq \lambda\in \overline{\Lambda}_j+\overline{Q}$ ($j\in J$) satisfies the relative asymptotic multiplicity condition \eqref{eqn:relasympmul},
		 then we have:
		\begin{align}
			&\lim_{\substack{n\rightarrow\infty\\ n\lambda\in \overline{\Lambda}_j+\overline{Q}}}
			\widehat{\jones}_{T(p,p')}(n\lambda) \notag\\
			&\,\,= 
			\dfrac{
			q^{-\frac{pp'}{2}\left\|\overline\Lambda_j +\overline{\sigma}_j\frac{\overline{\rho}}{p}
			-\frac{\overline{\rho}}{p'} \right\|^2} }
			{\prod_{\alpha\succ 0 	}(1-q^{(\alpha,\overline\rho)})}
			\eta(q)^{\ell} \ch \mathbf{L}
			(\gamma_{-w_{0}\overline{\Lambda( (p-h^{\vee})\Lambda_j, (p'-h)\Lambda_0)}})\notag\\
			&\,\,\in 1+q\ZZ[[q]].
			\label{eqn:SClimit}
		\end{align}
		\item If $\overline{\la{g}}$ is not simply-laced, if $p,p'$ satisfy \eqref{cond:pp'large}, and if 
		$\lambda\in \overline{P}_+\cap \overline{Q}$ ($\lambda\neq 0$) satisfies the relative asymptotic multiplicity condition \eqref{eqn:relasympmul}, then we have:
		\begin{align}
			&\lim_{\substack{n\rightarrow\infty}}
			\widehat{\jones}_{T(p,p')}(n\lambda) \notag\\
			&\,\,		=\dfrac{q^{-\frac{pp'}{2}\left(\frac{1}{p}-\frac{1}{p'}\right)^2\|\overline\rho\|^2} }{\prod_{\alpha\succ 0 }(1-q^{(\alpha,\overline\rho)})}
			\sum_{\substack{\mu\in \overline Q\\ \overline w\in \overline W}} (-1)^{\len(\overline w)} \cdot q^{\frac{pp'}{2}\left\| \mu +\frac{\overline \rho}{p}-\overline{w}\frac{\overline\rho}{p'}\right\|^2}.
			\label{eqn:nonSClimit}
		\end{align}
	\end{enumerate} 
\end{thm}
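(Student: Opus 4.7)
The plan is to start from the closed forms for $\widehat{\jones}_{T(p,p')}(n\lambda)$ provided by Theorem \ref{thm:ADEjones_hat} (case (1)) and by Theorem \ref{thm:minexp} applied to $n\lambda$ in place of $\lambda$ (case (2)), and then take the $q$-adic limit $n\to\infty$ term by term. In both cases the prefactor in front of the sum depends only on $p,p',\lambda$ and not on $n$, so all of the work lies in analyzing the sum
\begin{align*}
S_n(q) := \sum_{\substack{\mu\in\wt(n\lambda)\\\overline{w}\in\overline{W}}} \frac{m_{n\lambda}(\mu)}{m_{n\lambda}(\mu_\star)}\,(-1)^{\len(\overline{w})}\, q^{\frac{pp'}{2}\left\|\mu+\frac{\overline{\rho}}{p}-\overline{w}\frac{\overline{\rho}}{p'}\right\|^2},
\end{align*}
where $\mu_\star=-\overline{\Lambda}_i$ in case (1) and $\mu_\star=0$ in case (2).

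My first step would be to establish coefficient-wise $q$-adic convergence of $S_n(q)$. For any real $N$, the inequality $\tfrac{pp'}{2}\|\mu+\tfrac{\overline{\rho}}{p}-\overline{w}\tfrac{\overline{\rho}}{p'}\|^2\leq N$ confines $\mu$ to a bounded ball in $\overline{\la{h}}^*$, and this ball meets the discrete coset $\lambda+\overline{Q}$ in only finitely many points; hence only finitely many $(\mu,\overline{w})$ contribute to the $q^N$-coefficient of $S_n(q)$. For each such fixed $\mu$, Corollary \ref{cor:limwtset} gives $\mu\in\wt(n\lambda)$ for all sufficiently large $n$, Lemma \ref{lem:nonzeromult} ensures $m_{n\lambda}(\mu_\star)\neq 0$ eventually, and the relative asymptotic multiplicity hypothesis \eqref{eqn:relasympmul} yields $m_{n\lambda}(\mu)/m_{n\lambda}(\mu_\star)\to 1$. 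Taking the limit inside each finite coefficient sum identifies $\lim_{n\to\infty}S_n(q)$, coefficient by coefficient, with
\begin{align*}
S_\infty(q) := \sum_{\substack{\mu\in\lambda+\overline{Q}\\\overline{w}\in\overline{W}}} (-1)^{\len(\overline{w})}\, q^{\frac{pp'}{2}\left\|\mu+\frac{\overline{\rho}}{p}-\overline{w}\frac{\overline{\rho}}{p'}\right\|^2}.
\end{align*}

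For part (1), I would identify $S_\infty(q)$ with the character sum of Corollary \ref{cor:Wminexp}. Since $\lambda+\overline{Q}=-\overline{\Lambda}_i+\overline{Q}$, and since $\overline{Q}^\vee=\overline{Q}$, $\overline{\rho^\vee}=\overline{\rho}$ in the simply-laced case, $S_\infty(q)$ coincides with the sum on the RHS of \eqref{eqn:wchar3} taken at the parameters $(p-h^\vee)\Lambda_0\in P^{p-h^\vee}_+$ and $(p'-h)\Lambda_0\in P^{\vee,p'-h}_+$ (both admissible because $p,p'\geq h^\vee$). The relation $\sigma_j((p-h^\vee)\Lambda_0)\equiv(p-h^\vee)\Lambda_j\pmod{\CC\delta}$, together with the insensitivity of the classical part to $\delta$-shifts, identifies the relevant admissible weight as $\Lambda((p-h^\vee)\Lambda_j,(p'-h)\Lambda_0)$, so Corollary \ref{cor:Wminexp} expresses $S_\infty(q)$ as a sign (computed via \eqref{eqn:sgnsigmaj}) times $\eta(q)^\ell\,\ch\mathbf{L}(\gamma_{-w_0\overline{\Lambda((p-h^\vee)\Lambda_j,(p'-h)\Lambda_0)}})$. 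Combining with the prefactor of \eqref{eqn:jones_hat} and bookkeeping signs via \eqref{eqn:sgnsigmaj} and the involution $j\leftrightarrow i$ in \eqref{eqn:Jij} then yields \eqref{eqn:SClimit}. The containment in $1+q\ZZ[[q]]$ is inherited from the constant term of each $\widehat{\jones}_{T(p,p')}(n\lambda)$ being $1$ (by construction of the renormalization) and from the fact that the $q^N$-coefficient of the limit is a finite $\ZZ$-linear combination of $\pm 1$ contributions from $(-1)^{\len(\overline{w})}$ convolved with the partition-type expansion of $1/\prod_{\alpha\succ 0}(1-q^{(\alpha,\overline{\rho})})$.

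For part (2) the argument is essentially identical but shorter: $\lambda\in\overline{P}_+\cap\overline{Q}$ implies $n\lambda\in\overline{P}_+\cap\overline{Q}$ for every $n\geq 1$, and condition \eqref{cond:pp'large} does not depend on $\lambda$, so Theorem \ref{thm:minexp} applies uniformly to every $n\lambda$. Taking the $q$-adic limit with $\mu_\star=0$ and $\lambda+\overline{Q}=\overline{Q}$ produces \eqref{eqn:nonSClimit} directly; no $\sW$ algebra identification is available since Corollary \ref{cor:Wminexp} is confined to the simply-laced setting. The main obstacle throughout is the justification for interchanging limit and a priori infinite sum in the presence of the multiplicity ratios $m_{n\lambda}(\mu)/m_{n\lambda}(\mu_\star)$, which is precisely what the $q$-adic convergence observation in the second paragraph resolves; the only remaining technical matter is the careful sign bookkeeping in the case (1) identification.
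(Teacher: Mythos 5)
Your proposal is correct and follows exactly the route the paper takes: the paper's own proof is a one-line citation of the relative asymptotic multiplicity condition, Corollary \ref{cor:limwtset}, and the character formula \eqref{eqn:wchar3}, and your argument simply fills in the coefficient-wise ($q$-adic) convergence details behind that combination. The only caveat is in the final sign bookkeeping for part (1): since $\eta(q)^\ell\ch\mathbf{L}$ equals $(-1)^{2\langle\overline{\Lambda}_j,\overline{\rho}\rangle}S_\infty(q)$ by \eqref{eqn:wchar3}, the sign from the prefactor of \eqref{eqn:jones_hat} squares away, so a careful computation lands on the stated formula only up to the overall factor $(-1)^{2\langle\overline{\Lambda}_j,\overline{\rho}\rangle}$ (which is forced to be absent by the claim that the limit lies in $1+q\ZZ[[q]]$); this is a discrepancy in the statement itself rather than a gap in your argument.
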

\begin{proof} 
	The statements are clear after combining the relative asymptotic multiplicity condition, Corollary \ref{cor:limwtset}, and the characters of $\sW$ algebra modules given in \eqref{eqn:wchar3}.
\end{proof}

\begin{rem}
	The pure power of $q$ appearing in \eqref{eqn:SClimit} depends on $p,p'$. However, it is natural to absorb this factor (along with the $q^{\ell/24}$ contributed by $\eta(q)^\ell$) into $\ch \mathbf{L}$. This turns $\ch\mathbf{L}$ into the normalized character, say, $\overline{\ch}\, \mathbf{L}\in 1+q\ZZ_{\geq 0}[[q]]$, which is better suited for combinatorial purposes. With this, the right-hand side of \eqref{eqn:SClimit} may be written as:
	\begin{align*}
		\dfrac{(q)_\infty^\ell}
		{\prod_{\alpha\succ 0}(1-q^{(\alpha,\overline\rho)})}
		\cdot 
		\overline{\ch}\,\mathbf{L}
		(\gamma_{-w_{0}\overline{\Lambda( (p-h^{\vee})\Lambda_j, 	(p'-h)\Lambda_0)}}).
	\end{align*}
\end{rem}

\begin{rem} 
	This theorem subsumes most of the results from our previous paper \cite{Kan-torus} since the $\la{sl}_r$ highest weights $n\overline{\Lambda}_1$ satisfy \eqref{eqn:relasympmul}. These are highest weights of the irreducible modules $\mathrm{Sym}^n(\CC^r)$ where each weight multiplicity is exactly $1$.
\end{rem}

\begin{rem}\label{rem:nonSClimit}
	In \eqref{eqn:nonSClimit}, if $\overline{\rho}/p'$ were $\overline{\rho^\vee}/p'$ and if $\overline{Q}$ were changed to $\overline{Q}^\vee$, it would be the character of the corresponding $\sW$ algebra. However, the non-simply laced case prevents this from happening. The LHS of \eqref{eqn:nonSClimit} is always invariant under $p\longleftrightarrow p'$ (Remark \ref{rem:pp'invariance}), however in the non-simply-laced cases, the tricky nature of Feigin--Frenkel duality prevents the $\sW$ algebras at shifted levels $p/p'$ and $p'/p$ from being isomorphic.
\end{rem}

In the next section, we will give examples of Lie algebras and their weights which satisfy \eqref{eqn:relasympmul}.

\section{Relative asymptotic weight multiplicities}
\label{sec:relasympmul}

As we have shown, matching the large colour limits of coloured Jones polynomials hinges upon the relative asymptotic multiplicities being $1$. Our main conjecture is that this is always the case.

\begin{conj}
	\label{conj:main}
	For all finite-dimensional simple Lie algebras $\overline{\la{g}}$ and all non-zero dominant integral weights $\lambda\in \overline{P}_+\backslash \{0\}$, the relative asymptotic multiplicity condition	\eqref{eqn:relasympmul} is satisfied.
\end{conj}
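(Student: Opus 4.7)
The plan is to combine Kostant's multiplicity formula with the asymptotic theory of the Kostant partition function $\cP$. Kostant's formula reads
$$m_{n\lambda}(\mu) = \sum_{\overline{w} \in \overline{W}} (-1)^{\len(\overline{w})} \cP\bigl(\overline{w}(n\lambda + \overline{\rho}) - \mu - \overline{\rho}\bigr),$$
where $\cP(\beta)$ counts the non-negative integer decompositions of $\beta$ as a sum of positive roots of $\overline{\la{g}}$. For each $\overline{w}\in\overline{W}$ the argument of $\cP$ has the shape $n\,\overline{w}\lambda + \xi_{\overline{w},\mu}$ with $\xi_{\overline{w},\mu}$ independent of $n$. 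For $n$ sufficiently large, this argument lies in a fixed open chamber of the Dahmen--Micchelli fan on which $\cP$ is a quasi-polynomial. Hence $\cP(n\,\overline{w}\lambda + \xi_{\overline{w},\mu})$ is eventually a quasi-polynomial in $n$, whose leading term depends only on the ray $\RR_{>0}\,\overline{w}\lambda$ and on the residue class of $\xi_{\overline{w},\mu}$ modulo a fixed sublattice --- but not on $\mu$ itself.

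Summing over $\overline{W}$ yields that $n \mapsto m_{n\lambda}(\mu)$ is eventually a quasi-polynomial $c_\lambda(\mu)\,n^{d_\lambda}+\cdots$ whose degree $d_\lambda$ is $\mu$-independent. The next step is to show that the top coefficient $c_\lambda(\mu)$ is also $\mu$-independent and strictly positive. For this one appeals to Heckman's theorem: the rescaled weight measures of $L(n\lambda)$ converge weakly, as $n\to\infty$, to the Duistermaat--Heckman measure of the orbit through $\lambda$, whose density $f_\lambda$ is continuous and piecewise polynomial on the convex hull of $\overline{W}\lambda$. Since $\lambda \neq 0$ and this convex hull is $\overline{W}$-invariant, the origin lies in its interior and $f_\lambda(0)>0$. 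Matching the two asymptotic descriptions at $\mu/n\to 0$ forces $c_\lambda(\mu)=f_\lambda(0)$, a positive constant independent of $\mu$; the desired limit
$$\lim_{\substack{n\to\infty\\ n\lambda\in\lambda+\overline{Q}}} \frac{m_{n\lambda}(\mu_1)}{m_{n\lambda}(\mu_2)} = 1$$
then follows by division of leading asymptotics.

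The principal obstacle is the treatment of non-regular $\lambda$ --- weights lying on the walls of the dominant chamber. In that case the stabilizer subgroup of $\lambda$ in $\overline{W}$ induces cancellations among the terms of Kostant's formula and the naive $\cP$-level leading term of degree $|\overline{\Phi}_+|-\ell$ vanishes; the true $d_\lambda$ is strictly smaller and the identification of the surviving leading piece requires a careful polytopal (or symplectic-reduction) analysis to match against the correct singular DH density. Precisely upgrading Heckman's weak-convergence statement to a \emph{pointwise} asymptotic at fixed $\mu$ (rather than at rescaled $\mu/n$ in the interior of the polytope) is where the technical effort would concentrate, and this is presumably why the authors state the result as a conjecture and verify it only in select examples.
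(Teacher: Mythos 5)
The statement you are proving is stated in the paper as Conjecture \ref{conj:main}: the paper offers no proof, only verifications for special families (first fundamental weights in classical types, and all weights in ranks $\le 2$). So there is no argument of the paper to compare yours against; the question is whether your outline actually closes the conjecture. It does not — it is a reasonable strategy, but two of its steps are genuine gaps, one of which you name yourself.

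First, the claim that the leading term of $n \mapsto \wp(n\,\overline{w}\lambda + \xi_{\overline{w},\mu})$ is independent of $\mu$ is only automatic when the top homogeneous part $P_d$ of the relevant chamber polynomial satisfies $P_d(\overline{w}\lambda)\neq 0$. Writing $P(nv+\xi)=\sum_k P_k(nv+\xi)$ with $P_k$ homogeneous, if $P_d(v)=0$ then the surviving coefficient of $n^{d-1}$ is $\langle \nabla P_d(v),\xi\rangle + P_{d-1}(v)$, which \emph{does} depend on $\xi_{\overline{w},\mu}$ and hence on $\mu$. This is exactly what happens for non-regular $\lambda$ (e.g.\ $\lambda=\overline{\Lambda}_1$ or $\overline{\Lambda}_2$ in $\lietype{G}_2$, where the paper finds that the $n^4$ terms cancel across the Weyl sum and must then check by hand, using the sub-dominant terms of Tarski's formulas, that the surviving $n^3$ and $n^2$ coefficients are $\mu$-free). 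Your outline gives no mechanism forcing the $\mu$-dependent pieces to cancel in the alternating sum over $\overline{W}$ in general; that cancellation is the whole content of the conjecture in the singular case. A related unaddressed point: if $\overline{w}\lambda$ lies on a wall of the chamber complex, the points $n\,\overline{w}\lambda+\xi_{\overline{w},\mu}$ may fall into \emph{different} chambers for different $\mu$, so even the choice of governing quasi-polynomial is $\mu$-dependent.

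Second, the appeal to Heckman's theorem cannot deliver what you need. Weak convergence of the rescaled weight measures controls sums of $m_{n\lambda}(\mu)$ over regions of $\mu$ of macroscopic size $O(n)$, i.e.\ averages; the conjecture is a \emph{pointwise} statement at two fixed lattice points $\mu_1,\mu_2$. Since $\mu\mapsto m_{n\lambda}(\mu)$ is itself only quasi-polynomial, it could in principle oscillate with the residue class of $\mu$ while still reproducing the Duistermaat--Heckman density on average, and then the ratio $m_{n\lambda}(\mu_1)/m_{n\lambda}(\mu_2)$ need not tend to $1$. You flag this upgrade as ``where the technical effort would concentrate,'' which is accurate — but that concession means the argument as written establishes neither the positivity nor the $\mu$-independence of the leading coefficient, and so does not prove the conjecture. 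As a proof proposal it is a sensible research plan (and consistent with the partial evidence in Section \ref{sec:relasympmul}), but it should be presented as such, not as a proof.
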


We now check the validity of this conjecture in various examples.

\subsection{
	\texorpdfstring{Type $\lietype{A}_r$, $\lambda=j\overline{\Lambda}_1$}{Type Ar, first fundamental weight}}
The multiplicity of each weight in $L(j\overline \Lambda_1)$ module for $\la{sl}_r$ is $1$; this is just the $j$th symmetric power of the defining representation of $\la{sl}_r$.
Thus, clearly \eqref{eqn:relasympmul} holds for this family of modules, and we have already connected the relevant limits of coloured invariants to principal $\sW$-algebra characters in \cite{Kan-torus}.

\subsection{
	\texorpdfstring{Type $\lietype{C}_r$, $\lambda=j\overline\Lambda_1$}
	{Type Cr, first fundamental weight}
}

Here, the full character of the module is given by the appropriately specialized complete homogeneous symmetric polynomial (see \cite[Ch.\ 24]{FulHar}):
\begin{align*}
	H_j(x_1,\cdots,x_r,x_1^{-1},\cdots,x_r^{-1}).
\end{align*}
For a fixed weight $\mu\in \overline{P}$, the coefficient of $x_1^{\mu_1}\cdots x_r^{\mu_r}$ is the multiplicity $m_{j\overline{\Lambda}_1}(\mu)$ and is given by the number of integral solutions of the system:
\begin{align*}
	a_1+\cdots+a_r+b_1+\cdots+b_r = j,\\
	a_1-b_1=\mu_1,\cdots,a_r-b_r=\mu_r,\\
	a_1, \cdots, a_r, b_1,\cdots, b_r\geq 0.
\end{align*}
This is the same as the number of integral solutions to:
\begin{align*}
	2a_1+\cdots+2a_r = j+\mu_1+\cdots+\mu_r,\\
	a_1,\cdots, a_r\geq 0,\quad 
	a_1\geq \mu_1,\cdots, a_r\geq \mu_r.
\end{align*}
Let $\tilde\mu_i = \max(0,\mu_i)$ and $a_i=\tilde a_i + \tilde \mu_i$ for $1\leq i\leq r$. Then, the system is equivalent to:
\begin{align*}
	2\tilde a_1+\cdots+2\tilde a_r &= j+\mu_1+\cdots+\mu_r - 2\tilde \mu_1 - \cdots -2\tilde \mu_r = j-|\mu_1|-\cdots-|\mu_r|,\\
	\tilde a_1,\cdots, \tilde a_r&\geq 0.
\end{align*}
Let us assume that $j-|\mu_1|-\cdots-|\mu_r|$ is even, which is the same as assuming that $\mu\in j\overline{\Lambda}_1+ \overline Q$.
Further assume that $j\ge |\mu_1|+\cdots +|\mu_r|$.
Under these restrictions, 
\begin{align*}
	m_{j\overline \Lambda_1}(\mu) = 
	\binom{\frac{1}{2}(j-|\mu_1|-\cdots-|\mu_r|) + r-1}{r-1}
	= \frac{j^{r-1}}{2^{r-1}(r-1)!} + \cdots.
\end{align*}
Crucially, the highest order term is independent of $\mu$ (but the lower order terms in $j$ depend on $\mu$). 
This implies that \eqref{eqn:relasympmul} holds for this family of modules. In particular, one may take $\lambda=2\overline{\Lambda}_1 \in \overline{P}_+\cap \overline{Q}$ in Theorem \eqref{thm:ADElimits} part (2) for $\overline{\la{g}}$ of type $\lietype C_r$.

\subsection{
	\texorpdfstring{Type $\lietype{B}_r$, $\lambda=j\overline \Lambda_1$}
	{Type Br, first fundamental weight}
}

Here, the full character of the module is (see \cite[Ch.\ 24]{FulHar}):
\begin{align*}
	H_j(x_1,\cdots,x_r,x_1^{-1},\cdots,x_r^{-1},1)-H_{j-2}(x_1,\cdots,x_r,x_1^{-1},\cdots,x_r^{-1},1).
\end{align*}
Note that here $j\overline{\Lambda}_1\in \overline Q\cap \overline P_+$.
Thus, we restrict to $\mu\in \overline Q$.
Given such a $\mu$, the coefficient of $x_1^{\mu_1}\cdots x_r^{\mu_r}$ ($\mu_1,\cdots,\mu_r\in\ZZ$)  in the first term ($H_j$) is the number of non-negative integral solutions (for $a_1,\cdots,a_r,b_1,\cdots,b_r,c$) of:
\begin{align*}
	a_1+\cdots+a_r+b_1+\cdots+b_r +c = j\\
	a_1-b_1=\mu_1,\cdots,a_r-b_r=\mu_r,
\end{align*}
equivalently, integral solutions of:
\begin{align*}
	2a_1+\cdots+2a_r+c&=j+\mu_1+\cdots+\mu_r,\\
	a_1,\cdots, a_r\geq 0,\,\, &a_1\geq \mu_1,\cdots a_r\geq \mu_r,\\
	c&\geq 0.
\end{align*}
Now, the parity of $c$ is determined by the parity of $j+\mu_1+\cdots+\mu_r$. Let $\epsilon=1$ if this parity is odd, and $\epsilon=0$ otherwise, and let $c=2\tilde c+\epsilon$. 
Proceeding as in the $\lietype{C}_r$ case, the system has the same number of solutions as the system:
\begin{align*}
	2\tilde a_1+\cdots+2\tilde a_r+2\tilde c&=j-|\mu_1|-\cdots-|\mu_r|-\epsilon,\\
	\tilde a_1,\cdots, \tilde a_r&\geq 0,\,\,\tilde c\geq 0.
\end{align*}
Now for all $j\geq |\mu_1|+\cdots + |\mu_r|+\epsilon$, the number of solutions is
\begin{align*}
	\binom{\frac{1}{2}(j-|\mu_1|-\cdots-|\mu_r| -\epsilon)+ r}{r}.
\end{align*}
Similarly we analyze the $H_{j-2}$ term, which requires $j-2\geq |\mu_1|+\cdots + |\mu_r|+\epsilon$. All in all,
for large enough $j$ (depending on $\mu$),
the required multiplicity  is then:
\begin{align*}
	&m_{j\overline\Lambda_1}(\mu)\\
	&=\binom{\frac{1}{2}(j-|\mu_1|-\cdots-|\mu_r| -\epsilon) + r}{r}- \binom{\frac{1}{2}(j-2-|\mu_1|-\cdots-|\mu_r| -\epsilon) + r}{r}\\
	&=\binom{\frac{1}{2}(j-|\mu_1|-\cdots-|\mu_r| -\epsilon) + r-1}{r-1}
	=\frac{j^{r-1}}{2^{r-1}(r-1)!}+\cdots.
\end{align*}
Again, we see that the dominant term is independent of $\mu$ (lower order terms depend on $\mu$). This implies that \eqref{eqn:relasympmul} holds for this family of modules.
In particular, one may take $\lambda=\overline \Lambda_1\in \overline{P}_+\cap \overline{Q}$ in Theorem \ref{thm:ADElimits} part (2) for $\overline{\la{g}}$ of type $\lietype B_r$.

\subsection{
	\texorpdfstring{Type $\lietype{D}_r$, $\lambda=j\overline\Lambda_1$}
	{Type Dr, first fundamental weight}
}	

Here, the full character of the module is given as follows:
\begin{align*}
	H_j(x_1,\cdots,x_r,x_1^{-1},\cdots,x_r^{-1})-H_{j-2}(x_1,\cdots,x_r,x_1^{-1},\cdots,x_r^{-1}).
\end{align*}
Given a weight $\mu$, the coefficient of $x_1^{\mu_1}\cdots x_r^{\mu_r}$ ($\mu_1,\cdots,\mu_r\in\ZZ$) in the first term is the number of non-negative integral solutions (for $a_1,\cdots,a_r,b_1,\cdots,b_r$) of:
\begin{align*}
	a_1+\cdots+a_r+b_1+\cdots+b_r = j\\
	a_1-b_1=\mu_1,\cdots,a_r-b_r=\mu_r.
\end{align*}
Proceeding as in the $\lietype{C}_r$ case, assuming that $j-|\mu_1|-\cdots-|\mu_r|$ is even (equivalently, $\mu\in j\overline{\Lambda}_1+\overline Q$), and that $j\geq |\mu_1|+\cdots+|\mu_r|$, the number of solutions is:
\begin{align*}
	\binom{\frac{1}{2}(j-|\mu_1|-\cdots-|\mu_r|) + r-1}{r-1}.
\end{align*}
Analyzing similarly the $H_{j-2}$ term, (this time assuming that 
$j\geq |\mu_1|+\cdots+|\mu_r|+2$), the required coefficient is
\begin{align*}
	\binom{\frac{1}{2}(j-2-|\mu_1|-\cdots-|\mu_r|) + r-1}{r-1}.
\end{align*}
Combining, for $j\gg 0$ (depending on $\mu$) and of requisite parity,
\begin{align*}
	&m_{j\overline\Lambda_1}(\mu)\\
	&=\binom{\frac{1}{2}(j-|\mu_1|-\cdots-|\mu_r|) + r-1}{r-1}- \binom{\frac{1}{2}(j-2-|\mu_1|-\cdots-|\mu_r|) + r-1}{r-1}\\
	&=\binom{\frac{1}{2}(j-|\mu_1|-\cdots-|\mu_r|) + r-2}{r-2}
	=\frac{j^{r-2}}{2^{r-2}(r-2)!}+\cdots.
\end{align*}
We see again that the dominant term is independent of $\mu$. This implies that \eqref{eqn:relasympmul} holds for this family of modules.

In Theorem \ref{thm:ADElimits} part (1) for $\overline{\la{g}}$ of type $\lietype D_r$, we may take $\lambda=\overline{\Lambda}_1$, in which case, one has to progress through odd values of $n$ while calculating the limit.
If we take $\lambda=2\overline{\Lambda}_1\in \overline{P}_+\cap\overline{Q}$, all values of $n$ are allowed.

\subsection{Generalities about weight multiplicities}
In the next few subsections, we will show that the relative asymptotic multiplicities in rank $2$ Lie algebras satisfy \eqref{eqn:relasympmul}. 
In order to achieve this, we will use
Kostant's multiplicity formula \cite[Ch.\ 24]{Hum-la}
\begin{align*}
	m_{\lambda}(\mu)=\sum_{\overline w\in\overline  W}(-1)^{\len(\overline w)} \wp(\overline w\lambda + \overline w\,\overline \rho - \mu-\overline \rho),
\end{align*}
where $\wp$ is the Kostant partition function, which is a 
kind of a vector partition function. There is a beautiful theory about the behavior of such functions, see for example \cite{Stu} and \cite{BriVer-vectorpartitions}, etc. Vector partition functions are known to be piece-wise quasi-polynomial functions, a fact that was used by Garoufalidis and Vuong \cite{GarVuo}. The domains of quasi-polynomiality depend on how the space $\overline{Q}_+ = \ZZ_{\geq 0}\alpha_1+\cdots+\ZZ_{\geq 0}\alpha_\ell$ is partitioned into ``chambers'' (not to be confused with the Weyl chambers). In the rank $2$ cases we will analyze below, the chambers are easily described, but for higher ranks, their description is fairly involved; see for example \cite{BilGuiRas}. In any case, we will see in the examples below that there is a marked difference between the growth of the functions $n\mapsto m_{n\lambda}(\mu)$ when the weight $\lambda$ lies on a hyperplane spanned by a subset of positive roots and when it does not.

We mention in passing that exact formulas for the Kostant partition function are also required in relation to modular invariance properties of 
characters of certain non-rational VOAs, \cite{BriKasMilNaz-higherfalse}.

\subsection{Multiplicities in \texorpdfstring{$\lietype{A}_2$}{A2}}
The behaviour of multiplicities in $\lietype A_2$ is very well-known, see for example \cite{BilGuiRas}.

First let $\lambda$ not be proportional to $\overline \rho$. In this case, there is a triangular lacunary region (see all except the third picture in \eqref{fig:A2weights}) inside $\wt(\lambda)$ where the multiplicities are all equal to each other. In fact, if $\lambda$ is proportional to $\overline \Lambda_1$ or $\overline \Lambda_2$, then all of $\wt(\lambda)$ falls in this lacunary region; see the first and the last pictures in \eqref{fig:A2weights}.
This lacunary region is determined by the intersections of lines joining certain Weyl translates of $\lambda$, and thus, this region grows with $n\lambda$ as $n$ grows. In particular, any two fixed $\mu_1$ and $\mu_2$ in $\lambda+\overline Q$ eventually fall inside this lacunary region for all large $n\gg 0$ (still stipulating that $n\lambda\in \lambda+\overline Q$). Thus, in this case, \eqref{eqn:relasympmul} clearly holds.

\begin{align}
	\label{fig:A2weights}
	\begin{matrix}
	\begin{tikzpicture}[scale=0.5]
		\draw (1,0) coordinate (a1);
		\draw (120:1) coordinate (a2);
		\draw (1,0) coordinate (a1);
		\draw (90:0.67) coordinate (l2);
		\draw (30:0.67) coordinate (l1);
		\draw ($3*(l2)$) coordinate (lambda1);
		\draw ($3*(l1)-3*(l2)$) coordinate (lambda2);
		\draw ($-3*(l1)$) coordinate (lambda3);
		\draw (lambda1) -- (lambda2) -- (lambda3) -- (lambda1);
	\end{tikzpicture}
	\quad
	\begin{tikzpicture}[scale=0.5]
		\draw (1,0) coordinate (a1);
		\draw (120:1) coordinate (a2);
		\draw (1,0) coordinate (a1);
		\draw (90:0.67) coordinate (l2);
		\draw (30:0.67) coordinate (l1);
		\draw ($(l1)+2*(l2)$) coordinate (lambda1);
		\draw ($3*(l1)-2*(l2)$) coordinate (lambda2);
		\draw ($2*(l1)-3*(l2)$) coordinate (lambda3);
		\draw ($-2*(l1)-1*(l2)$) coordinate (lambda4);
		\draw ($-3*(l1)+1*(l2)$) coordinate (lambda5);
		\draw ($-1*(l1)+3*(l2)$) coordinate (lambda6);
		\draw (lambda1) -- (lambda4);
		\draw (lambda2) -- (lambda5);
		\draw (lambda3) -- (lambda6);
		\draw (lambda1) -- (lambda2) -- (lambda3) -- (lambda4) -- (lambda5) -- (lambda6)--(lambda1);
	\end{tikzpicture}
	\quad
	\begin{tikzpicture}[scale=0.5]
		\draw (1,0) coordinate (a1);
		\draw (120:1) coordinate (a2);
		\draw (1,0) coordinate (a1);
		\draw (90:0.67) coordinate (l2);
		\draw (30:0.67) coordinate (l1);
		\draw ($1.5*(l1)+1.5*(l2)$) coordinate (lambda1);
		\draw ($3*(l1)-1.5*(l2)$) coordinate (lambda2);
		\draw ($1.5*(l1)-3*(l2)$) coordinate (lambda3);
		\draw ($-1.5*(l1)-1.5*(l2)$) coordinate (lambda4);
		\draw ($-3*(l1)+1.5*(l2)$) coordinate (lambda5);
		\draw ($-1.5*(l1)+3*(l2)$) coordinate (lambda6);
		\draw (lambda1) -- (lambda4);
		\draw (lambda2) -- (lambda5);
		\draw (lambda3) -- (lambda6);
		\draw (lambda1) -- (lambda2) -- (lambda3) -- (lambda4) -- (lambda5) -- (lambda6)--(lambda1);
	\end{tikzpicture}
	\quad
	\begin{tikzpicture}[scale=0.5]
		\draw (1,0) coordinate (a1);
		\draw (120:1) coordinate (a2);
		\draw (1,0) coordinate (a1);
		\draw (90:0.67) coordinate (l2);
		\draw (30:0.67) coordinate (l1);
		\draw ($2*(l1)+(l2)$) coordinate (lambda1);
		\draw ($3*(l1)-(l2)$) coordinate (lambda2);
		\draw ($1*(l1)-3*(l2)$) coordinate (lambda3);
		\draw ($-1*(l1)-2*(l2)$) coordinate (lambda4);
		\draw ($-3*(l1)+2*(l2)$) coordinate (lambda5);
		\draw ($-2*(l1)+3*(l2)$) coordinate (lambda6);
		\draw (lambda1) -- (lambda4);
		\draw (lambda2) -- (lambda5);
		\draw (lambda3) -- (lambda6);
		\draw (lambda1) -- (lambda2) -- (lambda3) -- (lambda4) -- (lambda5) -- (lambda6)--(lambda1);
	\end{tikzpicture}
	\quad
	\begin{tikzpicture}[scale=0.5]
		\draw (1,0) coordinate (a1);
		\draw (120:1) coordinate (a2);
		\draw (1,0) coordinate (a1);
		\draw (90:0.67) coordinate (l2);
		\draw (30:0.67) coordinate (l1);
		\draw ($3*(l1)$) coordinate (lambda1);
		\draw ($-3*(l1)+3*(l2)$) coordinate (lambda2);
		\draw ($-3*(l2)$) coordinate (lambda3);
		\draw (lambda1) -- (lambda2) -- (lambda3) -- (lambda1);
	\end{tikzpicture}
	\end{matrix}
\end{align}

If $\lambda$ is proportional to $\overline \rho$, then the multiplicity is maximum at the origin and it decreases by $1$ with every hexagonal shell around the origin (the $n$th shell is the boundary of the convex hull of $n\overline\Delta$).
In particular, for any fixed $\mu\in\overline  Q$,  $m_{n\overline \rho}(0)-m_{n\overline \rho}(\mu)$ is eventually a constant $C$ (eventually, because $n$ needs to be large enough for $\mu$ to be a weight of $L(n\overline \rho)$). This constant $C$ is the shell on which $\mu$ falls. Further, $m_{n\overline \rho}(0)=n+1$. Thus, in this case also,
\begin{align*}
	\lim_{n\rightarrow \infty}\dfrac{m_{n\overline \rho}(\mu)}{m_{n\overline \rho}(0)}= \lim_{n\rightarrow \infty}\dfrac{n+1-C}{n+1} = 1.
\end{align*}

Thus, for $\overline{\la{g}} = \la{sl}_3 = \lietype{A}_2$, we may take any $\lambda\in \overline{P}_+\backslash\{0\}$ in Theorem \ref{thm:ADElimits} part (1).

\subsection{Multiplicities in \texorpdfstring{$\lietype{C}_2$}{C2}}
In the case of $\lietype A_2$, we broke the cases into two: the weights proportional to $\overline \rho$ and the weights that are not. As mentioned above, in general, the break-up is between \emph{regular} and \emph{non-regular} weights; where a weight is regular if it does not fall on any hyperplane spanned by a subset of positive roots.

In case of $\lietype C_2$, the fundamental weights are $\overline \Lambda_1=\epsilon_1$ and $\overline \Lambda_2=\epsilon_1+\epsilon_2$  (with $(\epsilon_i,\epsilon_j)=\delta_{i,j}$).
Note that this normalization means that the long roots have squared length $4$, which is different from the form $(\cdot,\cdot)$ defined above, however, this will not make a difference below.
The simple roots are $\alpha_1=\epsilon_1-\epsilon_2$, $\alpha_2=2\epsilon_2$, and rest of the positive roots are $\alpha_1+\alpha_2 = \epsilon_1+\epsilon_2$ and 
$2\alpha_1+\alpha_2 = 2\epsilon_1$.
Thus, $\overline \rho=2\epsilon_1+\epsilon_2$. 
The regular weights $\lambda\in\overline  P_+$ are the ones not proportional to $\overline \Lambda_1=\epsilon_1$ or $\overline \Lambda_2=\epsilon_1+\epsilon_2$.
The Weyl group has order $8$.

Let us write down the shape of the function $\wp$. 
\begin{align*}
	\begin{tikzpicture}
		\draw (0,0) -- (1,-1);
		\draw (0,0) -- (0,2);
		\draw (0,0) -- (1,1);
		\draw (0,0) -- (2,0);
		\draw[dashed] (0,0) -- (2,-2);
		\draw[dashed] (0,0) -- (0,3);
		\draw[dashed] (0,0) -- (2,2);
		\draw[dashed] (0,0) -- (3,0);
		\node[draw, shape = circle, fill=black, minimum size = 0.1cm, inner sep=0pt] at (1,-1) {};
		\node[draw, shape = circle, fill=black,  minimum size = 0.1cm, inner sep=0pt] at (0,2) {};
		\node[draw, shape = circle, fill=black, minimum size = 0.1cm, inner sep=0pt] at (1,1) {};
		\node[draw, shape = circle, fill=black, minimum size = 0.1cm, inner sep=0pt] at (2,0) {};
		\node[draw, shape = circle, fill=black, minimum size = 0.1cm, inner sep=0pt] at (2,1) {};
		\node[draw, shape = circle, fill=black, minimum size = 0.1cm, inner sep=0pt] at (4,2) {};
		\node at (0.7,-1) {$\alpha_1$};
		\node at (-0.3,2) {$\alpha_2$};
		\node at (2.2,1.2) {$\overline \rho$};
		\node at (4.2,2.2) {$\lambda$};
		\node[draw, shape = circle,  inner sep=1pt] at (67:1.3) {1};
		\node[draw, shape = circle, inner sep=1pt] at (22:1.3) {2};
		\node[draw, shape = circle,  inner sep=1pt] at (-22:1.3) {3};
		\node[draw, fill=white, rotate=-45] at (2,2) {$r_1$};
		\node[draw, fill=white, rotate=-0] at (3,0) {$r_2$};
	\end{tikzpicture}
\end{align*}

The dominant terms in Kostant partition function for $\alpha=a_1\alpha_1 + a_2\alpha_2$ in each of the regions is given by \cite{tar-partition} (where $\cdots$ denote lower order terms):
\begin{align*}
	\wp(\alpha) = 
	\begin{cases}
		\frac{1}{4}a_1^2 +\cdots & \mathrm{if}\,\, \alpha \in {\circled{1}},\\
		-\frac{1}{4}a_1^2 + a_1a_2 - \frac{1}{2}a_2^2+\cdots & \mathrm{if}\,\, \alpha \in {\circled{2}},\\
		\frac{1}{2}a_2^2 +\cdots & \mathrm{if}\,\, \alpha \in {\circled{3}}
	\end{cases}
\end{align*}

Now, let $\lambda=a_1\alpha_1+a_2\alpha_2\in \overline{P}_+$   (i.e., it belongs to region {\circled{2}}), and not proportional to $\alpha_1+\alpha_2=\epsilon_1+\epsilon_2$ or $\alpha_1+2\alpha_2=2\epsilon_1$ (i.e., it is in the interior of region {\circled{2}}). Let $\mu=m_1\alpha_1+m_2\alpha_2$ be in $\lambda+\overline Q$ and we will always let $n$ such that $n\lambda\in\lambda + \overline  Q$.

For $n\gg 0$, we have the following containment:
\begin{align*}
	n\overline w\lambda +\overline w\,\overline \rho-\mu-\overline \rho= 
	\begin{cases}
		(2na_2-na_1-1-m_1)\alpha_1+(na_2-m_2) \alpha_2\in {\circled{1}} &\mathrm{if}\,\, \overline w=r_1,\\
		(na_1-m_1)\alpha_1+(na_2-m_2)\alpha_2\in {\circled{2}} &\mathrm{if}\,\, \overline w=1,\\
		(na_1  -m_1)\alpha_1 + (n(a_1-a_2)-1-m_2)\alpha_2\in {\circled{3}} &\mathrm{if}\,\, \overline w=r_2.
	\end{cases}
\end{align*}
while for other $\overline w$, $n\overline w\lambda +\overline w\,\overline \rho-\mu-\overline \rho $ will not be a non-negative linear combination of positive roots.
We thus have (for $n\gg 0$):
\begin{align*}
	m_{n\lambda}(\mu) &= \sum_{\overline w\in \{r_1,1,r_2\}}
	(-1)^{\len(\overline w)} \wp(n\overline w\lambda +\overline w\,\overline \rho-\mu-\overline \rho),\\
	&=-\wp( nr_1\lambda+r_1\overline \rho-\mu-\overline \rho)+\wp( n\lambda-\mu)-\wp( nr_2\lambda+r_2\overline \rho-\mu-\overline \rho)\\
	&=- \frac{1}{4} (2na_2-na_1-1-m_1)^2+\cdots\\
	&\quad 
	-\frac{1}{4}(na_1-m_1)^2+(na_1-m_1)(na_2-m_2)-\frac{1}{2}(na_2-m_2)^2+\cdots\\
	&\quad - \frac{1}{2} (na_1-na_2-1-m_2) ^2+\cdots\\
	&= n^2\left( -\frac{1}{4}a_1^2+a_1a_2-\frac{1}{2}a_2^2 -\frac{1}{4}(2a_2-a_1)^2 - \frac{1}{2}(a_1-a_2)^2 \right) + \cdots\\
	&=(a_1-a_2)(2a_2-a_1)n^2+\cdots,
\end{align*}
where $\cdots$ in the final line correspond to lower order terms in $n$.
Clearly, this dominant term (in $n$) only depends on $\lambda$, and is non-zero as long as $\lambda$ is in the interior of region {\circled{2}}.

We thus conclude that if $\lambda=a_1\alpha_1+a_2\alpha_2\in \overline{P}_+$ is in the interior of region \circled{2}, $\mu_1,\mu_2\in \lambda+\overline Q$ and $n\gg 0$ (depending on $\mu_1$ and $\mu_2$ both), then,
\begin{align*}
	\lim\limits_{\substack{n\rightarrow\infty\\ n\lambda\in\lambda+\overline  Q}} \dfrac{m_{n\lambda}(\mu_1)}{m_{n\lambda}(\mu_2)}
	=\lim\limits_{\substack{n\rightarrow\infty\\ n\lambda\in \lambda+\overline Q}} \dfrac{n^2(-(a_1-a_2)(a_1-2a_2))+\cdots}{n^2(-(a_1-a_2)(a_1-2a_2))+\cdots} = 1,
\end{align*}
note again that the lower order terms $\cdots$ do depend on $\mu_1$ and $\mu_2$, but obviously they do not matter in the limit.

If $\lambda$ is proportional to either $\overline \Lambda_1=2\alpha_1+\alpha_2$ or $\overline \Lambda_2=\alpha_1+\alpha_2$, the result still holds by using the behavior of the multiplicities of relevant modules explained in \cite[Sec.\ 16.2]{FulHar}.

Thus, for $\overline{\la{g}} = \la{sp}_4 = \lietype{C}_2$, we may take any $\lambda\in (\overline{P}_+\cap \overline{Q} )\backslash\{0\}$ in Theorem \ref{thm:ADElimits} part (2).
\subsection{Multiplicities in \texorpdfstring{$\lietype{G}_2$}{G2}}
We will carry out analogous analysis for $\lietype{G}_2$, however it is somewhat more tedious. Note that in this case, $\overline{P}=\overline{Q}$ (thus, $\overline P_+\cap \overline Q = \overline P_+$).

The Kostant chamber, positive roots, fundamental roots, are as below.
Note that $\overline \rho=5\alpha_1+3\alpha_2$, $r_1\alpha_2=3\alpha_1+\alpha_2$ and $r_2\alpha_1=\alpha_1+\alpha_2$.
\begin{align*}
	\begin{tikzpicture}
		\draw (1,0) coordinate (a1);
		\draw (-1.5,0.87) coordinate (a2);
		\draw (0,0) -- (a1);
		\draw (0,0) -- (a2);
		\draw (0,0) -- ($(a1)+(a2)$);
		\draw (0,0) -- ($2*(a1)+(a2)$);
		\draw (0,0) -- ($3*(a1)+(a2)$);
		\draw (0,0) -- ($3*(a1)+2*(a2)$);
		\node[draw, shape = circle, fill=black, minimum size = 0.1cm, inner sep=0pt] at (a1) {};
		\node[draw, shape = circle, fill=black, minimum size = 0.1cm, inner sep=0pt] at (a2) {};
		\node[draw, shape = circle, fill=black, minimum size = 0.1cm, inner sep=0pt] at ($(a1)+(a2)$) {};
		\node[draw, shape = circle, fill=black, minimum size = 0.1cm, inner sep=0pt] at ($2*(a1)+(a2)$) {};
		\node[draw, shape = circle, fill=black, minimum size = 0.1cm, inner sep=0pt] at ($3*(a1)+(a2)$) {};
		\node[draw, shape = circle, fill=black, minimum size = 0.1cm, inner sep=0pt] at ($3*(a1)+2*(a2)$) {};
		\node[draw, shape = circle, fill=black, minimum size = 0.1cm, inner sep=0pt] at ($5*(a1)+3*(a2)$) {};
		\node at ($3*(a1)+2*(a2)+(0.3,-0.0)$) {$\scriptstyle{\overline \Lambda_2}$};
		\node at ($2*(a1)+(a2)+(0.3,-0.0)$) {$\scriptstyle{\overline \Lambda_1}$};
		\draw[dashed] (0,0) -- (0:4);
		\draw[dashed] (0,0) -- (30:4.3);
		\draw[dashed] (0,0) -- (60:4.6);
		\draw[dashed] (0,0) -- (90:4.9);
		\draw[dashed] (0,0) -- (120:4.6);
		\draw[dashed] (0,0) -- (150:4.3);
		\node at ($5*(a1)+3*(a2)+(0.2,0.2)$) {$\overline \rho$};
		\node at ($(a1)-(0,0.3)$) {$\alpha_1$};
		\node at ($(a2)-(0,0.3)$) {$\alpha_2$};
		\node[draw, shape = circle,  inner sep=1pt] at (135:2.2) {1};
		\node[draw, shape = circle, inner sep=1pt] at (105:2.2) {2};
		\node[draw, shape = circle, inner sep=1pt] at (75:2.2) {3};
		\node[draw, shape = circle, inner sep=1pt] at (45:2.2) {4};
		\node[draw, shape = circle, inner sep=1pt] at (15:2.2) {5};
		\node[draw, shape = circle, fill=black, minimum size = 0.1cm, inner sep=0pt] at ($9*(a1) + 5*(a2)$) {};
		\node at ($9*(a1) + 5*(a2)+(0.2,0.2)$) {$\lambda$};
		\node[draw, fill=white] at (90:3.5) {$r_1$};
		\node[draw, fill=white, rotate=-30] at (60:3.5) {$r_2$};
	\end{tikzpicture}
\end{align*}

The dominant terms in Kostant partition function for $\alpha=a_1\alpha_1 + a_2\alpha_2$ in each of the regions are (see \cite{tar-partition}) (where $\cdots$ denote lower order terms):
\begin{align*}
	\wp(\alpha) = 
	\begin{cases}
		\frac{1}{432}a_1^4 +\cdots & \mathrm{if}\,\, \alpha \in \circled{1},\\
		\frac{1}{432}a_1^4 -\frac{1}{48}(a_1-a_2-1)^4+\cdots & \mathrm{if}\,\, \alpha \in {\circled{2}},\\
		\frac{1}{48}a_2^4-\frac{1}{432}(3a_2-a_1-1)^4 + \frac{1}{48}(2a_2-a_1-2)^4 +\cdots & \mathrm{if}\,\, \alpha \in {\circled{3}},\\
		\frac{1}{48}a_2^4 - \frac{1}{432}(3a_2-a_1-1)^4 +\cdots & \mathrm{if}\,\, \alpha \in {\circled{4}},\\
		\frac{1}{48}a_2^4 +\cdots & \mathrm{if}\,\, \alpha \in {\circled{5}}.
	\end{cases}
\end{align*}
Let $\lambda=a_1\alpha_1+a_2\alpha_2 \in \overline{P}_+$ and not proportional to $\overline \Lambda_1=2\alpha_1+\alpha_2$ or $\overline \Lambda_2=3\alpha_1+2\alpha_2$ (i.e., it belongs to the interior of the region {\circled{3}}). Let $\mu=m_1\alpha_1+m_2\alpha_2$ be in $\overline Q$.
As long as $n\gg 0$, we have the following containment:
\begin{align*}
	&n\overline w\lambda +\overline w\,\overline \rho-\mu-\overline \rho\\
	&= 
	\begin{cases}
		(2a_1n - 3a_2n - m_1 - 4)\alpha_1 
		+ (a_1n - a_2n - m_2 - 1)\alpha_2
		\in{\circled{1}} &\mathrm{if}\,\, \overline w=r_1r_2,\\
		(-a_1n + 3a_2n - m_1 - 1)\alpha_1
		+(a_2n - m_2)\alpha_2
		\in
		{\circled{2}} &\mathrm{if}\,\,\overline  w=r_1,\\
		(a_1n - m_1)\alpha_1
		+(a_2n - m_2)\alpha_2
		\in{\circled{3}} &\mathrm{if}\,\,\overline  w=1,\\
		(a_1n - m_1)\alpha_1
		+(a_1n - a_2n - m_2 - 1)\alpha_2
		\in{\circled{4}} &\mathrm{if}\,\,\overline  w=r_2,\\
		(-a_1n + 3a_2n - m_1 - 1)\alpha_1
		+(-a_1n + 2a_2n - m_2 - 2)\alpha_2
		\in{\circled{5}} &\mathrm{if}\,\,\overline  w=r_2r_1.
	\end{cases}
\end{align*}
After a painstaking computation, we therefore have:
\begin{align*}
	m_{n\lambda}(\mu)& = \sum_{\overline w\in \{r_1r_2,r_1,1,r_2,r_2r_1\}}(-1)^{\len(\overline w)} \wp(n\overline w\lambda +\overline w\,\overline \rho-\mu-\overline \rho),\\
	&=\frac{1}{72}(a_1 - 2a_2)(2a_1 - 3a_2)(4a_1^2 - 15a_1a_2 + 12a_2^2)n^4+\cdots.
\end{align*}
The dominant term corresponds to $n^4$ and it is independent of $\mu$ (in fact, so is the case with $n^3$ term, but we have omitted it here) and non-zero whenever $\lambda$ is in the interior of ${\circled{3}}$, thus, again, we may conclude:
\begin{align*}
	\lim\limits_{\substack{n\rightarrow\infty}} \dfrac{m_{n\lambda}(\mu_1)}{m_{n\lambda}(\mu_2)}=1.
\end{align*}

We now handle the remaining cases $\lambda=a\overline \Lambda_1$ or $\lambda=a\overline \Lambda_2$. Here we will need the description of the sub-dominant terms that arise in the formula for $\wp$ given in \cite{tar-partition}. This is tedious, as expected. We omit most of the calculations.

Let us consider $\lambda=\overline \Lambda_2=3\alpha_1+2\alpha_2$. 
It is enough to consider $\mu\in\overline  P_+$, since every weight can be brought to this region using Weyl group action without changing its multiplicity.

Since $\mu\in\overline  P_+\subseteq\overline  Q_+=\ZZ_{\geq 0}\alpha_1+\ZZ_{\geq 0}\alpha_2$ and since $\overline w\,\overline \rho-\overline \rho\in -\overline  Q_+$ for all $\overline w$, we have that $\overline w\,\overline \rho-\overline \rho-\mu\in -\overline  Q_+$. We may thus safely omit those $\overline w$ such that $\overline w\overline \Lambda_2\in -\overline  Q_+\backslash \{0\}$ since $\wp(n\overline w\overline \Lambda_2+\overline w\,\overline \rho-\mu-\overline \rho)=0$.

This leaves us with $\overline w\in \{1,r_1,r_2,r_2r_1,r_1r_2,r_1r_2r_1\}$.
For the last two, $\overline w\overline \Lambda_2$ is on the outer boundary of region {\circled{1}}, and $r_1r_2\overline \rho-\overline \rho = -4\alpha_1-\alpha_2$, $r_1r_2r_1\overline \rho-\overline \rho=-6\alpha_1-2\alpha_2$, 
thus $\overline w\overline \Lambda_2+\overline w\overline \rho-\mu-\overline \rho\in -\overline Q_+\backslash\{0\}$.
We are thus left with $\overline w\in \{1,r_1,r_2,r_2r_1\}.$
With fixed $\mu=m_1\alpha_1+m_2\alpha_2$ and $n\gg 0$, we have the containment:
\begin{align*}
	&n\overline w\overline \Lambda_2+\overline w\,\overline \rho-\mu-\overline \rho\\
	&=
	\begin{cases}
		(3n - m_1)\alpha_1
		+(2n - m_2)\alpha_2
		\in{\circled{2}} &\mathrm{if}\,\, \overline w=1,\\
		(3n - m_1 - 1)\alpha_1
		+(2n - m_2)\alpha_2
		\in
		{\circled{2}} &\mathrm{if}\,\, \overline w=r_1,\\
		(3n - m_1)\alpha_1
		+(n - m_2 - 1)\alpha_2
		\in{\circled{5}} &\mathrm{if}\,\, \overline w=r_2,\\
		(3n - m_1 - 1)\alpha_1
		+(n - m_2 - 2)\alpha_2
		\in{\circled{5}} &\mathrm{if}\,\,\overline  w=r_2r_1.
	\end{cases}
\end{align*}

Now, one has to use the description of the sub-dominant terms in the partition function given in \cite{tar-partition}. Omitting the details, we now obtain:
\begin{align*}
	m_{n\overline \Lambda_2}(\mu)=\frac{n^3}{12} +\frac{3n^2}{8}+\cdots,
\end{align*}		
where the lower order terms depend on $\mu$, and also exhibit quasi-polynomiality. 
This establishes \eqref{eqn:relasympmul} for $\lambda=\overline \Lambda_2$.
Note also that when $\lambda$ was regular, the growth of the multiplicities in $L(n\lambda)$ was of the fourth power in $n$, but now, there is a cancellation in the highest order term, and we are left with a slower growth.

We work similarly for $\lambda=\overline \Lambda_1=2\alpha_1+\alpha_2$. In this case, the relevant $\overline w$s give us for $n\gg 0$:
\begin{align*}
	&n\overline w\Lambda_1+\overline w\,\overline \rho-\mu-\overline \rho\\
	&=
	\begin{cases}
		(2n - m_1)\alpha_1
		+(n - m_2)\alpha_2
		\in{\circled{4}} &\mathrm{if}\,\, \overline w=1,\\
		(2n - m_1)\alpha_1
		+(n - 1 - m_2)\alpha_2
		\in
		{\circled{4}} &\mathrm{if}\,\, \overline w=r_2,\\
		(n - 1 - m_1)\alpha_1
		+(n - m_2)\alpha_2
		\in{\circled{1}} &\mathrm{if}\,\, \overline w=r_1,\\
		(n - 4 - m_1)\alpha_1
		+(n - 1 - m_2)\alpha_2
		\in{\circled{1}} &\mathrm{if}\,\,\overline  w=r_1r_2.
	\end{cases}
\end{align*}
Adding all the contributions, we get:
\begin{align*}
	m_{n\overline \Lambda_1}(\mu)=\frac{n^3}{36} + \frac{5n^2}{24}+\cdots,
\end{align*}
Again, even though there is a cancellation of the $n^4$ terms, the next dominant term which corresponds to $n^3$ is independent of $\mu$ (and so is the term for $n^2$). The omitted lower order terms (which don't matter for \eqref{eqn:relasympmul}) depend on $\mu$ and also exhibit quasi-polynomiality.
This establishes \eqref{eqn:relasympmul} for $\lambda=\overline \Lambda_1$.

Concluding, for $\overline{\la{g}} = \lietype{G}_2$, we may take any $\lambda\in \overline{P}_+\backslash\{0\}$ in Theorem \ref{thm:ADElimits} part (2).

\section{Questions}

There are several questions for further study.

\begin{enumerate}

\item Assuming Conjecture \ref{conj:main}, the cases considered in the main Theorem \ref{thm:ADElimits} show that the limits of $\widehat{\jones}$ depend only on the coset $\lambda+\overline{Q}$ and not on the colour $\lambda$ itself. Thus, for a fixed torus knot, there are at most as many limits as the fundamental group $\overline{P}/\overline{Q}$.
This phenomenon appears to not have been discussed in the literature before. Is there an intrinsic, colour-independent (but $\overline{P}/\overline{Q}$-dependent) characterization of these limits? Does this same phenomenon hold for other knots and links?

\item What happens in the simply-laced case if $p,p'$ are not both $\geq h^\vee$? The earliest instance of this is the paper of Garoufalidis--Morton--Vuong \cite{GarMorVuo}, see also some specific conjectures from our paper \cite{Kan-torus}.

\item The estimate provided in Theorem \ref{thm:minexp} is quite crude, as explained in Remark \ref{rem:minexp}. Could one use the structure theory of the underlying lattice, Voronoi cells \cite{ConSlo-book}, etc., to get better estimates? Can these techniques be used to bypass representation-theoretic arguments?

\item  Why principal $\sW$ algebras? Is there an a priori reason that the coloured invariants of torus knots are related to $\sW$ algebras?

\item In the absence of a full proof of Conjecture \ref{conj:main}, at least some further families of examples would be desirable. This is being investigated \cite{KanAyy}.
\end{enumerate}


\begin{thebibliography}{10}
	
	\bibitem{And-book}
	G.~E. Andrews.
	\newblock {\em The theory of partitions}.
	\newblock Cambridge Mathematical Library. Cambridge University Press,
	Cambridge, 1998.
	\newblock Reprint of the 1976 original.
	
	\bibitem{Ara-vanishing}
	T.~Arakawa.
	\newblock Vanishing of cohomology associated to quantized {D}rinfeld-{S}okolov
	reduction.
	\newblock {\em Int. Math. Res. Not.}, (15):730--767, 2004.
	
	\bibitem{Ara-repW1}
	T.~Arakawa.
	\newblock Representation theory of {$\mathscr{W}$}-algebras.
	\newblock {\em Invent. Math.}, 169(2):219--320, 2007.
	
	\bibitem{Ara-BGG}
	T.~Arakawa.
	\newblock Two-sided {BGG} resolutions of admissible representations.
	\newblock {\em Represent. Theory}, 18:183--222, 2014.
	
	\bibitem{Ara-c2}
	T.~Arakawa.
	\newblock Associated varieties of modules over {K}ac-{M}oody algebras and
	{$C_2$}-cofiniteness of {$W$}-algebras.
	\newblock {\em Int. Math. Res. Not. IMRN}, (22):11605--11666, 2015.
	
	\bibitem{Ara-princrat}
	T.~Arakawa.
	\newblock Rationality of {$W$}-algebras: principal nilpotent cases.
	\newblock {\em Ann. of Math. (2)}, 182(2):565--604, 2015.
	
	\bibitem{ArmDas-RR}
	C.~Armond and O.~T. Dasbach.
	\newblock {R}ogers-{R}amanujan type identities and the head and tail of the
	colored {J}ones polynomial.
	\newblock \preprint{1106.3948}{math.GT}.
	
	\bibitem{BilGuiRas}
	S.~Billey, V.~Guillemin, and E.~Rassart.
	\newblock A vector partition function for the multiplicities of
	{$\mathfrak{sl}_k\Bbb C$}.
	\newblock {\em J. Algebra}, 278(1):251--293, 2004.
	
	\bibitem{BriKasMilNaz-higherfalse}
	K.~Bringmann, J.~Kaszian, A.~Milas, and C.~Nazaroglu.
	\newblock Higher depth false modular forms.
	\newblock {\em Commun. Contemp. Math.}, 25(7):Paper No. 2250043, 53, 2023.
	
	\bibitem{BriVer-vectorpartitions}
	M.~Brion and M.~Vergne.
	\newblock Residue formulae, vector partition functions and lattice points in
	rational polytopes.
	\newblock {\em J. Amer. Math. Soc.}, 10(4):797--833, 1997.
	
	\bibitem{ConSlo-book}
	J.~H. Conway and N.~J.~A. Sloane.
	\newblock {\em Sphere packings, lattices and groups}, volume 290 of {\em
		Grundlehren der mathematischen Wissenschaften [Fundamental Principles of
		Mathematical Sciences]}.
	\newblock Springer-Verlag, New York, 1988.
	\newblock With contributions by E. Bannai, J. Leech, S. P. Norton, A. M.
	Odlyzko, R. A. Parker, L. Queen and B. B. Venkov.
	
	\bibitem{Fei-combO}
	P.~Fiebig.
	\newblock The combinatorics of category {$\mathscr O$} over symmetrizable
	{K}ac-{M}oody algebras.
	\newblock {\em Transform. Groups}, 11(1):29--49, 2006.
	
	\bibitem{FreKacWak}
	E.~Frenkel, V.~Kac, and M.~Wakimoto.
	\newblock Characters and fusion rules for {$W$}-algebras via quantized
	{D}rinfel'd-{S}okolov reduction.
	\newblock {\em Comm. Math. Phys.}, 147(2):295--328, 1992.
	
	\bibitem{FulHar}
	W.~Fulton and J.~Harris.
	\newblock {\em Representation theory}, volume 129 of {\em Graduate Texts in
		Mathematics}.
	\newblock Springer-Verlag, New York, 1991.
	\newblock A first course, Readings in Mathematics.
	
	\bibitem{GarLep}
	H.~Garland and J.~Lepowsky.
	\newblock Lie algebra homology and the {M}acdonald-{K}ac formulas.
	\newblock {\em Invent. Math.}, 34(1):37--76, 1976.
	
	\bibitem{GarLe-Nahm}
	S.~Garoufalidis and T.~T.~Q. L\^{e}.
	\newblock Nahm sums, stability and the colored {J}ones polynomial.
	\newblock {\em Res. Math. Sci.}, 2:Art. 1, 55, 2015.
	
	\bibitem{GarMorVuo}
	S.~Garoufalidis, H.~Morton, and T.~Vuong.
	\newblock The {$\mathrm{SL}_3$} colored {J}ones polynomial of the trefoil.
	\newblock {\em Proc. Amer. Math. Soc.}, 141(6):2209--2220, 2013.
	
	\bibitem{GarVuo}
	S.~Garoufalidis and T.~Vuong.
	\newblock A stability conjecture for the colored {J}ones polynomial.
	\newblock {\em Topology Proc.}, 49:211--249, 2017.
	
	\bibitem{HikSug}
	K.~Hikami and S.~Sugimoto.
	\newblock Torus links {$T_{2s,2t}$} and {$(s, t)$}-log {V}{O}{A}, to appear.
	
	\bibitem{Hum-la}
	J.~E. Humphreys.
	\newblock {\em Introduction to {L}ie algebras and representation theory},
	volume~9 of {\em Graduate Texts in Mathematics}.
	\newblock Springer-Verlag, New York-Berlin, 1978.
	\newblock Second printing, revised.
	
	\bibitem{Kac-book}
	V.~G. Kac.
	\newblock {\em Infinite-dimensional {L}ie algebras}.
	\newblock Cambridge University Press, Cambridge, third edition, 1990.
	
	\bibitem{KacWak-modinv}
	V.~G. Kac and M.~Wakimoto.
	\newblock Modular invariant representations of infinite-dimensional {L}ie
	algebras and superalgebras.
	\newblock {\em Proc. Nat. Acad. Sci. U.S.A.}, 85(14):4956--4960, 1988.
	
	\bibitem{KacWak-rationality}
	V.~G. Kac and M.~Wakimoto.
	\newblock On rationality of {$W$}-algebras.
	\newblock {\em Transform. Groups}, 13(3-4):671--713, 2008.
	
	\bibitem{Kan-toruslinks}
	S.~Kanade.
	\newblock Characters of logarithmic vertex operator algebras and coloured
	invariants of torus links.
	\newblock \preprint{2305.17543}{math.QA}.
	
	\bibitem{Kan-torus}
	S.~Kanade.
	\newblock Coloured {$\mathfrak {sl}_r$} invariants of torus knots and
	characters of {${\mathcal {W}}_r$} algebras.
	\newblock {\em Lett. Math. Phys.}, 113(1):Paper No. 5, 21, 2023.
	
	\bibitem{KanAyy}
	S.~Kanade and A.~Ayyer.
	\newblock In progress.
	
	\bibitem{Mac-cat}
	S.~MacLane.
	\newblock {\em Categories for the working mathematician}, volume Vol. 5 of {\em
		Graduate Texts in Mathematics}.
	\newblock Springer-Verlag, New York-Berlin, 1971.
	
	\bibitem{Mor-coloured}
	H.~R. Morton.
	\newblock The coloured {J}ones function and {A}lexander polynomial for torus
	knots.
	\newblock {\em Math. Proc. Cambridge Philos. Soc.}, 117(1):129--135, 1995.
	
	\bibitem{OniVin}
	A.~L. Onishchik and E.~B. Vinberg.
	\newblock {\em Lie groups and algebraic groups}.
	\newblock Springer Series in Soviet Mathematics. Springer-Verlag, Berlin, 1990.
	\newblock Translated from the Russian and with a preface by D. A. Leites.
	
	\bibitem{RocWal-proj}
	A.~Rocha-Caridi and N.~R. Wallach.
	\newblock Projective modules over graded {L}ie algebras. {I}.
	\newblock {\em Math. Z.}, 180(2):151--177, 1982.
	
	\bibitem{RosJon-torus}
	M.~Rosso and V.~Jones.
	\newblock On the invariants of torus knots derived from quantum groups.
	\newblock {\em J. Knot Theory Ramifications}, 2(1):97--112, 1993.
	
	\bibitem{Stu}
	B.~Sturmfels.
	\newblock On vector partition functions.
	\newblock {\em J. Combin. Theory Ser. A}, 72(2):302--309, 1995.
	
	\bibitem{tar-partition}
	J.~Tarski.
	\newblock Partition function for certain simple {L}ie algebras.
	\newblock {\em J. Mathematical Phys.}, 4:569--574, 1963.
	
	\bibitem{Wak-book2}
	M.~Wakimoto.
	\newblock {\em Lectures on infinite-dimensional {L}ie algebra}.
	\newblock World Scientific Publishing Co., Inc., River Edge, NJ, 2001.
	
	\bibitem{Yua-stability}
	W.~Yuasa.
	\newblock The zero stability for the one-row colored {$\mathfrak{sl}_3$}
	{J}ones polynomial.
	\newblock \preprint{2007.15621}{math.GT}.
	
\end{thebibliography}

\providecommand{\oldpreprint}[2]{\textsf{arXiv:\mbox{#2}/#1}}\providecommand{\preprint}[2]{\textsf{arXiv:#1
		[\mbox{#2}]}}

\end{document}